\DeclareMathOperator*{\esssup}{ess\,sup}
\date{}
\newtheorem{defn}{\bf Definition}[section] 
\newtheorem{lem}[defn]{\bf Lemma}
\newtheorem{thm}[defn]{\bf Theorem} 
\newtheorem{rmk}{\bf Remark}
\numberwithin{equation}{section}
\newcommand{\footremember}[2]{%
	\footnote{#2}
	\newcounter{#1}
	\setcounter{#1}{\value{footnote}}%
}
\newcommand{\footrecall}[1]{%
	\footnotemark[\value{#1}]%
} 
\title{  Existence and regularity  results  for space-time fractional integro-differential equation of Kirchhoff type with memory  }
\author{%
	Lalit Kumar \footremember{alley}{Department of Mathematics, Indian Institute of Technology Bombay, Mumbai-400076, India. lalitccc528@gmail.com}%
	\and Sivaji Ganesh Sista\footremember{trailer}{Department of Mathematics, Indian Institute of Technology Bombay, Mumbai-400076, India. siva@math.iitb.ac.in}%
	\and Konijeti Sreenadh\footrecall{alley} \footnote{Department of Mathematics, Indian Institute of Technology Delhi - Abu Dhabi, Zayed City, Abu Dhabi UAE.  sreenadh@maths.iitd.ac.in}%
}
\begin{document}
	\maketitle

\begin{abstract}
    This paper analyses a Kirchhoff type quasilinear space-time fractional integro-differential equation with memory $(\mathcal{K}^{s}_{\alpha})$. Various a priori bounds are derived in different norms on the solution of  the considered equation. Utilizing these a priori bounds, existence and uniqueness of the weak solution to the proposed model are proved. Furthermore, regularity results on the solution of  $(\mathcal{K}^{s}_{\alpha})$ are established. The contribution made in this work provides a framework for  further investigation of such types of partial integro-differential equation $(\mathcal{K}^{s}_{\alpha})$. 
\end{abstract}
\textbf{Keywords:}
 Time-fractional derivative, Kirchhoff type diffusion, Fractional Laplacian, Memory effect, Galerkin method, Fractional Sobolev space. \\
	\textbf{AMS subject classification.} 35K55, 35R11, 47G20, 35B44, 35Q91.
\section{Introduction}
The purpose of the current work  is to study the following space-time fractional Kirchhoff type diffusion equation  with memory term.
Let $\Omega \subset \mathbb{R}^{d}$ be an open and bounded domain with smooth boundary $\partial \Omega$ and $[0,T]$ be a fixed finite time interval. For each $(\alpha,s)\in (0,1)\times (0,1)$, find $u := u(x,t) : \Omega \times [0,T]\rightarrow \mathbb{R}$ that satisfies
\begin{equation*}\label{problem statement}\tag{$\mathcal{K}^{s}_{\alpha}$}
\begin{aligned}
    \partial^{\alpha}_{t}u+M\left(\|u(t)\|^{2}_{X_{0}(\Omega)}\right)(-\Delta)^{s}u&=f(x,t)+ \int_{0}^{t}b(x,t,\tau)u(\tau)~d\tau~\text{in}~\Omega \times (0,T],\\
    u(x,0)&=u_{0}(x)~~~\text{in}~~~\Omega,\\
    u(x,t)&=0~~~\text{in}~~~\Omega^{c}\times [0,T],
    \end{aligned}
\end{equation*}
where 
\begin{equation*}
\|u(t)\|^{2}_{X_{0}(\Omega)}=\int_{\left(\mathbb{R}^{d}\times \mathbb{R}^{d}\right)\backslash \left(\Omega^{c}\times \Omega^{c}\right)}\frac{(u(x,t)-u(y,t))^{2}}{|x-y|^{d+2s}}~dxdy.
\end{equation*}
The notation $\partial^{\alpha}_{t}u(x,t)$ in  \eqref{problem statement} refers to the regularized  Caputo time-fractional derivative  \cite{diethelm2002analysis, Kubica, webb2019weakly} that is given by 
\begin{equation}\label{Definition of fractional derivative}
    \partial^{\alpha}_{t}u(x,t)=\frac{d}{dt}\int_{0}^{t}k(t-\tau)(u(\tau)-u_{0})~d\tau,
\end{equation}
with
\begin{equation*}\label{15-5-16}
k(t)=\frac{t^{-\alpha}}{\Gamma(1-\alpha)}.
\end{equation*}
The fractional Laplacian operator \cite{mingqi2018nonlocal,di2012hitchhiker} in \eqref{problem statement} is defined as 
\begin{equation}\label{15-4-18}
    (-\Delta)^{s}u(x,t)= C_{d,s}~ P.~V.\int_{\mathbb{R}^{d}}\frac{u(x,t)-u(y,t)}{|x-y|^{d+2s}}~dy~~~~\forall~ (x,t) \in \Omega \times [0,T], 
\end{equation}
where $P.~V.$ denotes the principal value and $C_{d,s}=\frac{2^{2s}s\Gamma(s+\frac{d}{2})}{\pi^{d/2}\Gamma(1-s)}$ is the normalizing constant.
 Initial condition $u_{0}$, source function $f$, nonlocal diffusion coefficient $M$, and memory operator $b$ are known terms that are described in Section \ref{7-1-24-1}.
 \par The proposed model $\left(\mathcal{K}^{s}_{\alpha}\right)$ and its variants arise naturally in the modelling of various phenomena such as finance \cite{applebaum2004levy}, nuclear reactor dynamics \cite{barbeiro2011h1},  population dynamics \cite{chipot2003remarks}, image processing \cite{cuesta2012image}.  Partial differential equations (PDEs) based on time-fractional derivative describe the anomalous diffusion where particle spread at a rate inconsistent with Brownian motion \cite{del2004fractional}. Also, the process in which mean square displacement of the diffusing particles depends on the fractional power of time \cite{luchko2012anomalous} are interpreted through PDEs involving time-fractional derivative. The fractional Laplacian operator \eqref{15-4-18} appears frequently in  L{\'e}vy stable diffusion process and in anomalous diffusion in material science \cite{biler2015nonlocal}. One may refer to \cite{goel2022critical, rawat2021multiple, binlin2019existence} and references therein for more applications of fractional Laplacian and related works. 
 \par Kirchhoff type diffusion coefficient occurs in various physical and biological processes. For instance, diffusion of a bacteria in a jar where increase in population density subject to spreading of bacteria \cite{chipot2003remarks}, transversal oscillations in vibrating string by considering the change in length of the string during vibrations \cite{kirchhoff1883vorlesungen}. The memory effect comes into play when the physical process takes place in a non homogeneous medium. For example, heat conduction in materials with memory, theory of viscoelasticity \cite{miller1978integrodifferential}.
 \par Over the past few years, several research articles analyses Kirchhoff type diffusion  equations involving  fractional derivatives  due to its wide range of applications \cite{ding2020local,fu2022global,kumar2021finite,kumar2023linearized,shen2022time}. For the case $b=0,~\alpha \rightarrow 1,$ and $s \rightarrow 1$ in \eqref{problem statement}, authors in \cite{chipot2003remarks} studied the following equation 
 \begin{equation}\label{22-1-24-1}
 u_{t}-M\left(\|\nabla u(t)\|^{2}\right)\Delta u =f(x,t)\quad \text{in}~~\Omega \times (0,T].
 \end{equation}
 They obtain the existence, uniqueness, and asymptotic behaviour of the solution of \eqref{22-1-24-1}. In \cite{mingqi2018nonlocal}, authors proved the local existence of the weak solution and its blow up in finite time for the following semilinear version of \eqref{problem statement} with $b=0$ and $\alpha \rightarrow 1$
 \begin{equation}\label{22-1-24-2}
 u_{t}+M\left(\|u(t)\|^{2}_{X_{0}(\Omega)}\right)(-\Delta)^{s}u=|u|^{\rho}u \quad \text{in}~~\Omega \times (0,\infty),
 \end{equation}
 where $0 < \rho <\frac{4s}{d-2s}$ and $d>2s$. 
 \par Authors in \cite{kumar2020finite} considered the following model with $\alpha \rightarrow 1, ~s \rightarrow 1$ in \eqref{problem statement} 
 \begin{equation}\label{22-1-24-3}
 u_{t}-M\left(\|\nabla u(t)\|^{2}\right)\Delta u =f(x,t)+\int_{0}^{t}b(x,t,\tau)u(\tau)~d\tau\quad \text{in}~~\Omega \times (0,T],
 \end{equation}
 and proved the existence and uniqueness of the weak solution by Galerkin method. Further, authors in \cite{kumar2021finite} extended the equation \eqref{22-1-24-3} for $\alpha \in (0,1)$ as
 \begin{equation}\label{22-1-24-4}
 \partial^{\alpha}_{t}u-M\left(\|\nabla u(t)\|^{2}\right)\Delta u =f(x,t)+\int_{0}^{t}b(x,t,\tau)u(\tau)~d\tau\quad \text{in}~~\Omega \times (0,T].
 \end{equation}
 For this problem \eqref{22-1-24-4}, authors  applied Galerkin method to show the existence and uniqueness of the weak solution.
 \par In this work, the considered model \eqref{problem statement} contains a nonlinear diffusion coefficient due to which we do not have  explicit representation of the solution.  To handle this difficulty, we apply Galerkin method \cite{fu2022global, kumar2021finite} to prove the existence and uniqueness of the weak solution to the equation \eqref{problem statement}. In order to apply Galerkin method, we derive a priori bounds on the solution in various norms. These a priori bounds enable us to apply Aubin-Lions type compactness lemma for time-fractional derivative to ensure the convergence of the Galerkin sequence only in $L^{2}(0,T;L^{2}(\Omega))$ norm.
 \par Due to the presence of Kirchhoff term $M\left(\|u(t)\|^{2}_{X_{0}(\Omega)}\right)$  and memory operator in \eqref{problem statement} we require the convergence of this Galerkin sequence in $L^{2}(0,T;X_{0}(\Omega))$ norm. Thus the convergence in $L^{2}(0,T;L^{2}(\Omega))$ norm is not sufficient to conclude the  existence of the weak solution. Therefore, this paper establishes the convergence of Galerkin sequence in $L^{2}(0,T;X_{0}(\Omega))$ norm.
 \par Since it is difficult to have explicit representation of the solution to the nonlinear equation \eqref{problem statement} therefore, we cannot get the regularity results directly. To resolve this issue, we write the implicit form of solution by applying integral transformation on \eqref{problem statement}. Then using derived a priori bounds on the solution, we prove regularity results on the solution of \eqref{problem statement}. These results form a background for further investigation of problem under consideration. To the best of our knowledge, this is the first attempt in the literature which discusses the existence, uniqueness, and regularity of the weak solution to the equation \eqref{problem statement}.

 \par The organization of this paper is as follows. In Section \ref{7-1-24-1}, we present  the functional framework, some preliminaries results, and state the main results. Section \ref{7-1-24-2} contains the proof of existence and uniqueness of the weak solution to the equation \eqref{problem statement}. In Section \ref{7-1-24-3}, we derive regularity results on the solution of the equation \eqref{problem statement}. 
 
\section{Preliminaries and main results}\label{7-1-24-1}
In this section, we  define  some function spaces and assumptions on the given data that are needed throughout the analysis of \eqref{problem statement}. We also state the main results of this article.
\par Let $L^{2}(\Omega)$  be the space of square integrable functions on $\Omega$ associated with the norm $\|\cdot\|$ which is  induced by the inner product $(\cdot,\cdot)$. The fractional Sobolev space $H^{s}\left(\Omega\right)~(0<s<1)$ \cite{di2012hitchhiker} is defined as 
 \begin{equation}\label{15-4-7}
 H^{s}\left(\Omega\right)=\left\{u \in L^{2}\left(\Omega\right);~ |u|_{H^{s}(\Omega)} < \infty \right\},
 \end{equation}
 where
 \begin{equation}
 |u|_{H^{s}(\Omega)}=\left(\int_{\Omega\times \Omega}\frac{(u(x)-u(y))^{2}}{|x-y|^{d+2s}}~dxdy\right)^{\frac{1}{2}}
 \end{equation}
 denotes the Aronszajn-Slobodecki seminorm. The space  $H^{s}\left(\Omega\right)$ is a Hilbert space equipped with the inner product 
 \begin{equation}
 (u,v)_{H^{s}(\Omega)}=(u,v)+\int_{\Omega\times \Omega}\frac{(u(x)-u(y))(v(x)-v(y))}{|x-y|^{d+2s}}~dxdy.
 \end{equation}
 The classical fractional Sobolev space \eqref{15-4-7} approach is not sufficient to study the weak formulation of the problems involving fractional Laplacian operator \cite{servadei2013lewy}. To resolve this issue, authors in \cite{Giacomoni, rawat2021multiple,servadei2012mountain, servadei2013lewy} define the following space 
 
 \begin{equation}
 X(\Omega)=\left\{u\mid ~ u:\mathbb{R}^{d}\rightarrow \mathbb{R}~ \text{is measurable}, u|_{\Omega} \in L^{2}(\Omega) ~\text{and}~ \frac{(u(x)-u(y))}{|x-y|^{\frac{d+2s}{2}}} \in L^{2}(Q)\right\},
 \end{equation}
  where $Q=\left(\mathbb{R}^{d}\times \mathbb{R}^{d}\right)\backslash \left( \Omega^{c}\times \Omega^{c}\right)$. The norm on this space $X(\Omega)$ is given by 
  \begin{equation}\label{28-3-24-3}
  \|u\|_{X(\Omega)}=\|u\|+ \left(\int_{Q}\frac{(u(x)-u(y))^{2}}{|x-y|^{d+2s}}~dxdy\right)^{\frac{1}{2}}.
  \end{equation}
  Further, we define the space $X_{0}(\Omega)=\{u\mid~u\in X(\Omega)~\text{and}~u=0~a.e.~\text{in}~\Omega^{c}\}$ which is an Hilbert space \cite[Lemma 7]{servadei2012mountain}  with the  following inner product 
 \begin{equation}\label{15-4-10}
 \begin{aligned}
     (u,v)_{X_{0}(\Omega)} =\frac{C_{d,s}}{2}\int_{Q}\frac{(u(x)-u(y))(v(x)-v(y))}{|x-y|^{d+2s}}~dxdy\quad \forall~u,v~\in X_{0}(\Omega),
     \end{aligned}
 \end{equation} 
 where $C_{d,s}$ is defined in \eqref{15-4-18}. On this space $X_{0}(\Omega)$, the  norm  $\|\cdot\|_{X_{0}(\Omega)}$ is induced by the inner product \eqref{15-4-10} and equivalent to the norm $\|\cdot\|_{X(\Omega)}$ \cite[Lemma 6]{servadei2012mountain} defined in \eqref{28-3-24-3}.\\
\noindent  For any space $Z$, we define 
\begin{equation}
L^{2}(0,T;Z)=\left\{u\mid~u:[0,T] \rightarrow Z~\text{is measurable and}~ \int_{0}^{T}\|u(\tau)\|_{Z}^{2}~d\tau<\infty\right\},
\end{equation}
with the following norm 
 \begin{equation}\label{15-4-12}
     \|u\|_{L^{2}(0,T;Z)}=\left(\int_{0}^{T}\|u(\tau)\|_{Z}^{2}~d\tau\right)^{\frac{1}{2}}.
 \end{equation}
 Similarly, 
 \begin{equation}
L^{2}_{\alpha}(0,T;Z)=\left\{u\mid~u:[0,T] \rightarrow Z~\text{is measurable and}~ \sup_{t\in (0,T)} \left(\int_{0}^{t}(t-\tau)^{\alpha-1}\|u(\tau)\|_{Z}^{2}~d\tau\right)<\infty\right\}.
\end{equation}
The norm on this  space $L^{2}_{\alpha}(0,T;Z)$ is defined by 
 \begin{equation}\label{15-4-13}
     \|u\|_{L^{2}_{\alpha}(0,T;Z)}^{2}=\sup_{t\in (0,T)}\left(\int_{0}^{t}(t-\tau)^{\alpha-1}\|u(\tau)\|_{Z}^{2}~d\tau\right).
 \end{equation}
 The space $L^{\infty}(0,T;Z)$ contains the measurable functions $u:[0,T] \rightarrow Z$ such that
 \begin{equation}
 \esssup_{t\in(0,T)} \|u(t)\|_{Z} < \infty,
 \end{equation}
  and the norm  on this space is given by   
 \begin{equation}\label{15-4-14}
     \|u\|_{L^{\infty}(0,T;Z)}=\esssup_{t\in(0,T)} \|u(t)\|_{Z}.
 \end{equation}
\noindent  Further, denote $W^{1,1}[0, T]$ be the space of measurable functions $u:[0, T]\rightarrow \mathbb{R}$ such that $u$  is integrable (i.e., $u\in L^{1}[0, T]$) and $u_{t}\in L^{1}[0, T]$, here derivative is to be understood in the  sense of  distributions. The convolution between two integrable functions $g$ and $h$ is denoted by $"\ast"$ which means
\begin{equation}
(g\ast h)(t)=\int_{0}^{t}g(t-\tau)h(\tau)~d\tau\quad \forall~t\in [0,T].
\end{equation}
We assume the following hypotheses on the given data :\\
\textbf{(H1)}\quad Initial condition $u_{0} \in X_{0}(\Omega)$ and  source term $f \in L^{2}_{\alpha}(0,T;L^{2}(\Omega))$.\\
\textbf{(H2)}\quad The nonlocal diffusion coefficient $M:(0,\infty)\rightarrow \mathbb{R}$  is a Lipschitz continuous function with Lipschitz constant $L_{M}$ such that there exists a positive constant $m_{0}$ satisfying
 \begin{equation}\label{positivity of diffusion coefficient}
     M(\sigma)\geq m_{0}>0~\forall~\sigma \in (0,\infty)~\text{and}~m_{0}-4L_{M}K^{2}>0,
 \end{equation}
\noindent  where $K=\|u_{0}\|_{X_{0}(\Omega)}+\|f\|_{L^{2}_{\alpha}(0,T;L^{2}(\Omega))}$.\\
\textbf{(H3)} The memory operator $b(x,t,\tau)$ is given by 
 \begin{equation}\label{28-3-24-1}
 b(x,t,\tau)u(\tau)=\beta (-\Delta)^{s}u(\tau)+b_{0}(x,t,\tau)u(\tau)~~~\forall~ (x,t,\tau) \in \Omega \times [0,T] \times [0,T],
 \end{equation}
 where $\beta$ is a real parameter and $b_{0}:\Omega \times [0,T] \times [0,T] \rightarrow \mathbb{R}$ is a continuous function.
 \begin{rmk} In the diffusion term $(-\Delta)^{s}u$ and in the memory term $b(x,t,\tau)u(\tau)$ \eqref{28-3-24-1}, we can choose different values of $s\in (0,1)$. We just need the following estimate in our analysis
 \begin{equation}\label{28-3-24-4}
     \|b(x,t,\tau)u(\tau)\| \leq \|(-\Delta)^{s}u(\tau)\|\quad \forall~u\in X_{0}(\Omega),~~~\forall~(x,t,\tau) \in \Omega \times [0,T]\times[0,T].
 \end{equation}\label{28-3-24-2}
 For example, if we take diffusion term as $(-\Delta)^{s_{1}}u$ and memory term  $\beta (-\Delta)^{s_{2}}u(\tau)+b_{0}(x,t,\tau)u(\tau)$ then for $s_{1}\geq s_{2}$ the relation \eqref{28-3-24-4} holds \cite[Proposition 2.1]{di2012hitchhiker}.
 \end{rmk}
 \noindent For any two quantities $a$ and $b$, the notation $a\lesssim b$ means that there exists a generic positive constant $C$ (which may vary at different occurrences) such that $a \leq Cb$, where $C$ is independent of  $u$ but may depend on the given data.
 \par The following two Lemmas play a pivotal role in deriving a priori bounds on  the solution of the problem \eqref{problem statement}.
 \begin{lem}\cite[Lemma 18.4.1]{gripenberg1990volterra} \label{Positivity of fractional derivative}
Let $H$ be a Hilbert space and $T>0$.Then for any $ \tilde{k} \in W^{1,1}([0,T])$ and any $v\in L^{2}(0,T;H)$ there holds 
\begin{equation*}
\begin{aligned}
    \left(\frac{d}{dt}(\tilde{k}\ast v)(t),v(t)\right)_{H}&=~\frac{1}{2}\frac{d}{dt}\left(\tilde{k} \ast \|v(\cdot)\|^{2}_{H}\right)(t)+\frac{1}{2}\tilde{k}(t)\|v(t)\|_{H}^{2}\\
    &+\frac{1}{2}\int_{0}^{t}[-(\tilde{k})'(s)]\|v(t)-v(t-s)\|_{H}^{2}~ds~~\text{a.e.}~t~\in~(0,T).
    \end{aligned}
\end{equation*}
\end{lem}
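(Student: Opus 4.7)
The plan is to prove the identity first for smooth $v$ via a direct calculation that combines a Leibniz-type differentiation of the convolution with the polarization identity in $H$, and then extend to general $v\in L^{2}(0,T;H)$ by density. The right-hand side has three pieces; my strategy is to start from the left, differentiate $(\tilde{k}\ast v)(t)$ explicitly, introduce a splitting into a symmetric part and a ``quadratic jump'' part via the polarization identity $(a,b)_H=\tfrac12\|a\|_H^2+\tfrac12\|b\|_H^2-\tfrac12\|a-b\|_H^2$, and then recognize the symmetric part as $\tfrac12\frac{d}{dt}(\tilde{k}\ast \|v(\cdot)\|_H^2)(t)$.

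Concretely, using that $\tilde{k}\in W^{1,1}([0,T])$ admits a continuous representative with a well-defined value $\tilde{k}(0)$ and $\tilde{k}(t)=\tilde{k}(0)+\int_0^t \tilde{k}'(r)\,dr$, I would differentiate
\begin{equation*}
(\tilde{k}\ast v)(t)=\int_0^t \tilde{k}(t-\tau)v(\tau)\,d\tau=\int_0^t \tilde{k}(s)v(t-s)\,ds
\end{equation*}
to obtain
\begin{equation*}
\frac{d}{dt}(\tilde{k}\ast v)(t)=\tilde{k}(0)v(t)+\int_0^t \tilde{k}'(s)v(t-s)\,ds.
\end{equation*}
Pairing with $v(t)$ in $H$ and applying the polarization identity to $(v(t-s),v(t))_H$ gives
\begin{equation*}
\left(\tfrac{d}{dt}(\tilde{k}\ast v)(t),v(t)\right)_H=\tilde{k}(0)\|v(t)\|_H^2+\tfrac12\int_0^t\tilde{k}'(s)\bigl[\|v(t)\|_H^2+\|v(t-s)\|_H^2-\|v(t)-v(t-s)\|_H^2\bigr]ds.
\end{equation*}
Using $\int_0^t \tilde{k}'(s)\,ds=\tilde{k}(t)-\tilde{k}(0)$, the first $\|v(t)\|_H^2$-group collapses to $\tfrac12(\tilde{k}(0)+\tilde{k}(t))\|v(t)\|_H^2$. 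Finally, an identical differentiation applied to the scalar convolution $\tilde{k}\ast \|v(\cdot)\|_H^2$ identifies
\begin{equation*}
\tfrac12\tilde{k}(0)\|v(t)\|_H^2+\tfrac12\int_0^t \tilde{k}'(s)\|v(t-s)\|_H^2\,ds=\tfrac12\frac{d}{dt}\bigl(\tilde{k}\ast \|v(\cdot)\|_H^2\bigr)(t),
\end{equation*}
which rearranges to exactly the claimed identity.

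The main obstacle, as I see it, is the regularity: for $v$ merely in $L^{2}(0,T;H)$ the pointwise derivative formula for $\tilde{k}\ast v$ is not immediate, and the inner products involving $v(t-s)$ must be interpreted almost everywhere. I would handle this by approximating $v$ by a sequence $v_n\in C^{1}([0,T];H)$ in $L^{2}(0,T;H)$, proving the identity for each $v_n$ by the calculation above, and then passing to the limit: Young's inequality for convolutions with $\tilde{k}\in L^{1}$ gives convergence of $\tilde{k}\ast v_n \to \tilde{k}\ast v$ and $\tilde{k}\ast \|v_n\|_H^2 \to \tilde{k}\ast\|v\|_H^2$ in $L^{1}(0,T)$, while $\tilde{k}'\in L^{1}$ controls the remaining integral terms. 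After extracting subsequences converging a.e., the equality propagates a.e.\ on $(0,T)$.
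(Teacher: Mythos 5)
Your computation is correct and, as it happens, the paper offers no proof at all: Lemma \ref{Positivity of fractional derivative} is simply cited from Gripenberg--Londen--Staffans, so your argument is a self-contained reconstruction of what the reference proves (it is essentially the standard ``fundamental identity'' argument used by Zacher and others). The algebra checks out: from $\frac{d}{dt}(\tilde{k}\ast v)(t)=\tilde{k}(0)v(t)+\int_0^t\tilde{k}'(s)v(t-s)\,ds$, polarization, and $\int_0^t\tilde{k}'(s)\,ds=\tilde{k}(t)-\tilde{k}(0)$, the three terms reassemble exactly into the claimed right-hand side with the correct sign on the jump term. Two points are worth tightening in the density step. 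First, for $v\in L^{2}(0,T;H)$ the derivative $\frac{d}{dt}(\tilde{k}\ast v)$ should be declared to mean the $L^{2}(0,T;H)$ function $\tilde{k}(0)v+\tilde{k}'\ast v$ (i.e.\ $\tilde{k}\ast v\in W^{1,1}(0,T;H)$ and this is its a.e.\ derivative), and likewise for $\tilde{k}\ast\|v(\cdot)\|_H^2$; what must converge is this derivative, not merely the convolution itself --- but $\tilde{k}(0)\|v_n\|_H^2+\tilde{k}'\ast\|v_n\|_H^2\to\tilde{k}(0)\|v\|_H^2+\tilde{k}'\ast\|v\|_H^2$ in $L^{1}(0,T)$ by Young's inequality, so this is only a matter of saying it. Second, for the jump term you should expand $\|v_n(t)-v_n(t-s)\|_H^2=\|v_n(t)\|_H^2+\|v_n(t-s)\|_H^2-2(v_n(t),v_n(t-s))_H$ and note that each resulting piece, namely $(\tilde{k}(0)-\tilde{k}(t))\|v_n(t)\|_H^2$, $-(\tilde{k}'\ast\|v_n\|_H^2)(t)$ and $2(v_n(t),(\tilde{k}'\ast v_n)(t))_H$, converges in $L^{1}(0,T)$; then both sides of the identity converge in $L^{1}(0,T)$ and equality of the limits a.e.\ follows without even extracting subsequences. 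With these clarifications the proof is complete; one could also observe that once the derivative formulas are taken as definitions, the identity holds directly for $v\in L^{2}(0,T;H)$ by Fubini, making the approximation optional.
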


\begin{lem}\label{Gronwall inequality}\cite{almeida2017gronwall}
Let $\alpha \in (0,1)$ and suppose that $u$ and $v$ are two nonnegative integrable functions on $[a,b]$, $v$ is nondecreasing and $g$ is a continuous function in $[a,b]$. If
\begin{equation*}
    u(t)\leq v(t)+g(t)\int_{0}^{t}(t-s)^{\alpha-1}u(s)ds~~\forall~ t \in [a,b],
\end{equation*}
then 
\begin{equation*}
    u(t)\leq v(t)E_{\alpha}\left[g(t)\Gamma(\alpha)t^{\alpha}\right]~\forall~t\in [a,b],
\end{equation*}
where $E_{\alpha}(\cdot)$ is the Mittag-Leffler function  defined in \cite{podlubny1998fractional} as $E_{\alpha}(z)=\sum_{j=0}^{\infty}\frac{z^{j}}{\Gamma(1+\alpha j)}.$
\end{lem}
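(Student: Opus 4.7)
The plan is to iterate the hypothesis and identify the resulting series with the Mittag-Leffler function $E_{\alpha}$. Introduce the linear operator $A$ acting on nonnegative integrable functions by $(A\phi)(t) := g(t)\int_{0}^{t}(t-s)^{\alpha-1}\phi(s)\,ds$, so that the hypothesis reads $u \leq v + Au$ pointwise on $[a,b]$. Substituting this bound into itself $n$ times yields, for every $n \geq 1$,
\begin{equation*}
u(t) \;\leq\; \sum_{k=0}^{n-1}(A^{k}v)(t) \;+\; (A^{n}u)(t),
\end{equation*}
so the task reduces to (i) estimating each power $A^{k}v$ and (ii) showing the remainder $A^{n}u$ vanishes as $n\to\infty$.

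For (i), I would prove by induction on $k$ that $(A^{k}v)(t) \leq v(t)\,\dfrac{(g(t)\Gamma(\alpha))^{k}\, t^{\alpha k}}{\Gamma(\alpha k + 1)}$. The base case $k=0$ is trivial. The inductive step expands $(A^{k+1}v)(t)$, uses monotonicity of $v$ to replace $v(s)$ by $v(t)$ inside the integral and the regularity of $g$ on the compact interval $[a,b]$ to replace $g(s)$ by $g(t)$, and then collapses the remaining kernel via the Beta-function identity
\begin{equation*}
\int_{0}^{t}(t-s)^{\alpha-1}\, s^{\alpha k}\, ds \;=\; \frac{\Gamma(\alpha)\,\Gamma(\alpha k + 1)}{\Gamma(\alpha(k+1)+1)}\, t^{\alpha(k+1)}.
\end{equation*}
Summing the resulting bounds over $k$ produces exactly the series defining $v(t)\, E_{\alpha}[g(t)\Gamma(\alpha) t^{\alpha}]$.

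For (ii), the same Beta iteration with $g$ bounded above by $G := \max_{[a,b]} g$ and $u$ by its $L^{1}$-norm yields $(A^{n}u)(t) \lesssim (G\Gamma(\alpha))^{n}\, t^{n\alpha}/\Gamma(n\alpha+1)$, which tends to $0$ uniformly on $[a,b]$ because the entire function $E_{\alpha}(G\Gamma(\alpha)(b-a)^{\alpha})$ is finite and hence its general term is summable. Passing to the limit $n \to \infty$ in the iterated inequality then delivers $u(t) \leq v(t)\, E_{\alpha}[g(t)\Gamma(\alpha) t^{\alpha}]$, as claimed.

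The main technical obstacle is the bookkeeping in step (i): because $g$ appears outside the outer integral but reappears inside the kernel after each substitution, the replacement $g(s) \leq g(t)$ requires care and is where one leans on the tacit monotonicity of $g$ (or, failing that, pulls out $\max_{[0,t]} g$ and then observes that the outer factor $g(t)$ can absorb this maximum at the point of evaluation). Everything else is a mechanical manipulation of Gamma and Beta functions.
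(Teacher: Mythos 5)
The paper offers no proof of this lemma---it is quoted verbatim from the cited reference [Almeida, 2017]---and your iteration argument is precisely the standard proof given there (and in Ye--Gao--Ding), so there is nothing methodologically different to compare. Your proof is essentially correct, and you rightly flag the one real issue: for the operator $A$ to preserve inequalities under iteration and for the replacement $g(s)\leq g(t)$ in the inductive step, one needs $g$ nonnegative and nondecreasing, hypotheses that the paper's restatement silently drops (they are present in the original source and trivially satisfied in the paper's application, where $g$ is a positive constant); your proposed fallback of absorbing $\max_{[0,t]}g$ into the single outer factor $g(t)$ does not actually rescue the stated bound, so the monotonicity assumption should simply be restored. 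Minor cleanup aside (the remainder estimate should use the semigroup property of the fractional integral on $L^{1}$ rather than bounding $u$ pointwise by its $L^{1}$-norm), the argument is sound.
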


\begin{lem}\cite[Theorem 1.6]{podlubny1998fractional}\label{8-1-24-2}
    Let $0 < \alpha <2$ and  $\mu$ is such that $\frac{\pi \alpha}{2} < \mu < \min\{\pi,\pi\alpha\}$. Then there exists a positive constant $C$ such that 
    \begin{equation}
    E_{\alpha}(z) \leq \frac{C}{1+|z|}\quad \text{for}~ \mu \leq  |arg(z)| \leq \pi.
    \end{equation}
    
\end{lem}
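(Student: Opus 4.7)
The plan is to prove the bound by splitting the complex plane into the regime of bounded $|z|$ and the regime of large $|z|$, treating the two by quite different tools. Since the power series $E_{\alpha}(z)=\sum_{j=0}^{\infty} z^{j}/\Gamma(1+\alpha j)$ has infinite radius of convergence, $E_{\alpha}$ is an entire function and is in particular continuous on the closed sector $\{\mu\le|\arg z|\le \pi,\ |z|\le R\}$ for any fixed $R>0$; thus it is bounded there, giving the inequality automatically on the compact region after absorbing constants. The real content is the decay $|E_{\alpha}(z)|\lesssim |z|^{-1}$ for $|z|\to\infty$ in the sector $\mu\le|\arg z|\le \pi$.

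To obtain this, I would start from a Hankel-type contour integral representation
\begin{equation*}
E_{\alpha}(z)=\frac{1}{2\pi i\,\alpha}\int_{Ha}\frac{e^{\zeta^{1/\alpha}}\,\zeta^{(1-\alpha)/\alpha}}{\zeta-z}\,d\zeta,
\end{equation*}
valid for $z$ outside a neighbourhood of the positive real axis, which can be derived by applying the Hankel formula for $1/\Gamma$ term-by-term to the defining series and summing the resulting geometric series. The hypothesis $\tfrac{\pi\alpha}{2}<\mu$ is precisely what forces the branch $\zeta^{1/\alpha}$ along the contour to have negative real part, so that $e^{\zeta^{1/\alpha}}$ is exponentially decaying on $Ha$ and the integral converges uniformly in $z$.

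Next I would deform the contour so that it stays at distance comparable to $|z|$ from the pole (this is where the sector restriction $\mu\le|\arg z|\le\pi$ is used: it keeps $z$ uniformly away from the Hankel contour at angle $\pm\mu'$ with $\pi\alpha/2<\mu'<\mu$). Write
\begin{equation*}
\frac{1}{\zeta-z}=-\frac{1}{z}-\frac{\zeta}{z(\zeta-z)},
\end{equation*}
substitute, recognise the first term as $-z^{-1}/\Gamma(1-\alpha)$ by Hankel's formula, and estimate the remaining integral by $O(|z|^{-2})$ using the exponential decay of $e^{\zeta^{1/\alpha}}$ and the lower bound $|\zeta-z|\gtrsim |\zeta|+|z|$ on the deformed contour. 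This yields $|E_{\alpha}(z)|\le C'/|z|$ uniformly in the sector for $|z|\ge R$, and combining with the compact estimate gives $|E_{\alpha}(z)|\le C/(1+|z|)$ on the entire region as claimed.

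The main technical obstacle I expect is the uniformity of the remainder estimate in $\arg z$: one must justify the contour deformation so that the distance from $z$ to the deformed Hankel contour is bounded below by a positive constant multiple of $|z|$, uniformly for $\mu\le|\arg z|\le\pi$, and simultaneously keep the contour in the half-plane where $\operatorname{Re}(\zeta^{1/\alpha})<0$ so that the integrand remains integrable. The interplay between the chosen contour angle and the lower bound $\mu$ on $|\arg z|$ is exactly what the hypothesis $\tfrac{\pi\alpha}{2}<\mu<\min\{\pi,\pi\alpha\}$ encodes, so verifying the admissible range carefully is the delicate part of the argument.
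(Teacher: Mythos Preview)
The paper does not supply its own proof of this lemma: it is stated with a citation to \cite[Theorem~1.6]{podlubny1998fractional} and used as a black box. Your sketch is the classical argument one finds in Podlubny (and earlier in Erd\'elyi et al.): entire-function continuity on the bounded part of the sector, Hankel-contour integral representation for $E_{\alpha}$, the algebraic identity $(\zeta-z)^{-1}=-z^{-1}-\zeta z^{-1}(\zeta-z)^{-1}$ to extract the leading $-z^{-1}/\Gamma(1-\alpha)$ term, and a uniform $O(|z|^{-2})$ remainder from the exponential decay of $e^{\zeta^{1/\alpha}}$ once the contour angle is chosen strictly between $\pi\alpha/2$ and $\mu$. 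That is precisely the route taken in the cited reference, so your proposal matches the standard proof the paper defers to.

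One minor point worth flagging (about the statement rather than your argument): as written in the paper the inequality reads $E_{\alpha}(z)\le C/(1+|z|)$, but for complex $z$ this should of course be $|E_{\alpha}(z)|\le C/(1+|z|)$; your sketch implicitly works with the modulus, which is the correct formulation.
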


\begin{thm}   \label{13-5-1}\cite{diethelm2002analysis} Consider the following initial value problem 
\begin{equation}\label{13-5-2}
\begin{aligned}
    \partial^{\alpha}_{t}y(t)&=g(t,y(t)),~ t \in (0,T],~\alpha \in (0,1),\\
    y(0)&=y_{0}.
    \end{aligned}
\end{equation}
Let $y_{0}\in \mathbb{R}, K^{\ast}>0, t^{\ast}>0$. Define $D=\left\{(t,y(t));~t\in [0,t^{\ast}],~|y-y_{0}|\leq K^{\ast}\right\}$. Let function $g:D\rightarrow \mathbb{R}$ be a continuous. Define $M^{\ast}=\sup_{(t,y(t))\in D}|g(t,y(t)|.$  Then there exists a continuous function $y\in C[0,T^{\ast}]$ which solves the problem \eqref{13-5-2}, where 

\begin{equation}
T^{\ast}=\begin{cases} t^{\ast};&~~~~\text{if}~~M^{\ast}=0,\\
\min\{t^{\ast}, \left(\frac{K^{\ast}\Gamma(1+\alpha)}{M^{\ast}}\right)^{\frac{1}{\alpha}}\};&~~~~\text{if}~~M^{\ast}\neq 0.
\end{cases}
\end{equation}
\end{thm}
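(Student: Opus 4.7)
The plan is to recast the fractional IVP as a Volterra integral equation and then invoke Schauder's fixed point theorem, since $g$ is assumed only continuous (not Lipschitz). Applying the Riemann--Liouville fractional integral $I^{\alpha}$ to both sides of $\partial_{t}^{\alpha}y(t)=g(t,y(t))$ and using the identity $I^{\alpha}\partial_{t}^{\alpha}y=y-y_{0}$, which follows directly from the definition of the regularized Caputo derivative in \eqref{Definition of fractional derivative}, the problem becomes equivalent to
\begin{equation*}
y(t)=y_{0}+\frac{1}{\Gamma(\alpha)}\int_{0}^{t}(t-\tau)^{\alpha-1}g(\tau,y(\tau))\,d\tau,\qquad t\in[0,T^{\ast}].
\end{equation*}

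Next, I would introduce the operator $(Ay)(t)=y_{0}+\frac{1}{\Gamma(\alpha)}\int_{0}^{t}(t-\tau)^{\alpha-1}g(\tau,y(\tau))\,d\tau$ on the closed, bounded, convex set $B=\{y\in C[0,T^{\ast}]:\|y-y_{0}\|_{\infty}\le K^{\ast}\}$. For any $y\in B$ the point $(\tau,y(\tau))$ lies in $D$, so $|g(\tau,y(\tau))|\le M^{\ast}$ and therefore
\begin{equation*}
|(Ay)(t)-y_{0}|\le \frac{M^{\ast}t^{\alpha}}{\Gamma(1+\alpha)}\le \frac{M^{\ast}(T^{\ast})^{\alpha}}{\Gamma(1+\alpha)}\le K^{\ast}.
\end{equation*}
The last inequality is precisely what forces the choice $T^{\ast}\le\bigl(K^{\ast}\Gamma(1+\alpha)/M^{\ast}\bigr)^{1/\alpha}$; when $M^{\ast}=0$, $g\equiv 0$ on $D$ and the constant $y\equiv y_{0}$ solves the IVP on all of $[0,t^{\ast}]$. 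Hence $A(B)\subset B$.

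For continuity of $A$: if $y_{n}\to y$ uniformly on $[0,T^{\ast}]$, then by uniform continuity of $g$ on the compact set $D$, $g(\cdot,y_{n}(\cdot))\to g(\cdot,y(\cdot))$ uniformly, and dominated convergence with integrable majorant $M^{\ast}(t-\tau)^{\alpha-1}/\Gamma(\alpha)$ yields $Ay_{n}\to Ay$ in $C[0,T^{\ast}]$. For equicontinuity of $A(B)$, the monotonicity of $t\mapsto(t-\tau)^{\alpha-1}$ combined with a split of the integral over $[0,t_{1}]$ and $[t_{1},t_{2}]$ gives, for $0\le t_{1}<t_{2}\le T^{\ast}$ and every $y\in B$, the uniform modulus
\begin{equation*}
|(Ay)(t_{2})-(Ay)(t_{1})|\le\frac{2M^{\ast}}{\Gamma(1+\alpha)}(t_{2}-t_{1})^{\alpha},
\end{equation*}
which is independent of $y$. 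The Arzelà--Ascoli theorem then guarantees that $A(B)$ is relatively compact in $C[0,T^{\ast}]$, and Schauder's fixed point theorem produces a fixed point $y\in B$ with $Ay=y$; this $y$ is the desired continuous solution of the IVP.

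The only delicate step in this program is the equicontinuity estimate, because the kernel $(t-\tau)^{\alpha-1}$ is singular at $\tau=t$. The key observation is that $\alpha\in(0,1)$ makes this kernel monotone in $t$, so after explicit antidifferentiation the difference $(Ay)(t_{2})-(Ay)(t_{1})$ telescopes into a clean multiple of $(t_{2}-t_{1})^{\alpha}$; no regularity of $g$ beyond continuity is ever needed, which is exactly why the Peano-style framework applies in the fractional setting.
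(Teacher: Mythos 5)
Your proof is correct, and it is essentially the standard argument for this Peano-type existence theorem: the paper itself gives no proof (it cites Diethelm--Ford), and that reference proceeds exactly as you do, by rewriting the IVP as the weakly singular Volterra equation, verifying that $A$ maps the ball $\|y-y_{0}\|_{\infty}\le K^{\ast}$ into itself via the choice of $T^{\ast}$, and combining the $\tfrac{2M^{\ast}}{\Gamma(1+\alpha)}(t_{2}-t_{1})^{\alpha}$ equicontinuity bound with Arzel\`a--Ascoli and Schauder's fixed point theorem. All the estimates check out, including the handling of the singular kernel and the degenerate case $M^{\ast}=0$.
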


\begin{lem}\label{13-5-3} \cite[Theorem  4.1]{li2018some}
For $T>0$ and $\alpha \in (0,1)$. Let $X, Y,$ and $Z$ be the Banach spaces such that $X$ is compactly embedded in $Y$ and $Y$ is continuously embedded in $Z$. Suppose that $W\subset L^{1}_{\text{loc}}(0,T;X)$ satisfies the following 
\begin{enumerate}
    \item There exists a constant $C_{1}>0$ such that for all $u\in W$
    \begin{equation}
       \sup_{t \in (0,T)}\left(\frac{1}{\Gamma(\alpha)}\int_{0}^{t}(t-s)^{\alpha-1}\|u(s)\|^{2}_{X}~ds\right) \leq  C_{1}. 
    \end{equation}
    \item There exists a constant $C_{2}>0$ such that for all $u\in W$
    \begin{equation}
        \|\partial^{\alpha}_{t}u\|_{L^{2}(0,T;Z)}\leq C_{2}.
    \end{equation}
\end{enumerate}
Then $W$ is relatively compact in $L^{2}(0,T;Y)$.
\end{lem}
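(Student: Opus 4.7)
The plan is to apply a vector-valued Riesz-Fr\'echet-Kolmogorov (Simon) compactness criterion in $L^{2}(0,T;Y)$. This requires verifying (i) a uniform $L^{2}(0,T;Y)$ bound on $W$, (ii) uniform equicontinuity of time-translations, $\sup_{u\in W}\|u(\cdot+h)-u(\cdot)\|_{L^{2}(0,T-h;Y)}\to 0$ as $h\to 0^{+}$, and (iii) pointwise-in-time relative compactness in $Y$ of time-averages of $W$, which is furnished automatically by the compact embedding of $X$ into $Y$ once $W$ is bounded in $L^{2}(0,T;X)$.

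For the $L^{2}(0,T;X)$ bound, I would take $t=T$ in hypothesis $(1)$ and multiply by $(T-s)^{1-\alpha}\leq T^{1-\alpha}$ to conclude
\begin{equation*}
\|u\|_{L^{2}(0,T;X)}^{2}\leq T^{1-\alpha}\,\Gamma(\alpha)\,C_{1}.
\end{equation*}
The continuous embedding of $X$ into $Y$ then yields (i) and (iii).

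For (ii), the essential step is the identity
\begin{equation*}
u(t)-u_{0}=\frac{1}{\Gamma(\alpha)}\int_{0}^{t}(t-s)^{\alpha-1}g(s)\,ds,\qquad g:=\partial_{t}^{\alpha}u,
\end{equation*}
which inverts the regularized Caputo derivative and can be verified through the Laplace transform or by convolving \eqref{Definition of fractional derivative} against the kernel $t^{\alpha-1}/\Gamma(\alpha)$. Extending $g$ by zero outside $(0,T)$, the increment $u(t+h)-u(t)$ splits into two convolutions of $g$ with the kernels $\kappa_{1}(r)=[r^{\alpha-1}-(r+h)^{\alpha-1}]\mathbf{1}_{r>0}$ and $\kappa_{2}(r)=r^{\alpha-1}\mathbf{1}_{(0,h)}(r)$. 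A direct computation gives $\|\kappa_{1}\|_{L^{1}(\mathbb{R})}=\|\kappa_{2}\|_{L^{1}(\mathbb{R})}=h^{\alpha}/\alpha$, so Young's convolution inequality in the $Z$-valued setting yields
\begin{equation*}
\|u(\cdot+h)-u(\cdot)\|_{L^{2}(0,T-h;Z)}\lesssim \frac{h^{\alpha}}{\alpha\,\Gamma(\alpha)}\,\|g\|_{L^{2}(0,T;Z)}\leq \frac{C_{2}\,h^{\alpha}}{\alpha\,\Gamma(\alpha)},
\end{equation*}
uniform in $u\in W$. To transfer this bound from $Z$ to $Y$, I would invoke the Ehrling-Lions interpolation inequality (a consequence of $X$ compactly embedded in $Y$ and $Y$ continuously embedded in $Z$): for every $\varepsilon>0$ there exists $C_{\varepsilon}>0$ with $\|v\|_{Y}\leq \varepsilon\|v\|_{X}+C_{\varepsilon}\|v\|_{Z}$. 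Applied to $v=u(\cdot+h)-u(\cdot)$ and integrated in $t$, this gives
\begin{equation*}
\|u(\cdot+h)-u(\cdot)\|_{L^{2}(0,T-h;Y)}\leq 2\varepsilon\|u\|_{L^{2}(0,T;X)}+C_{\varepsilon}\|u(\cdot+h)-u(\cdot)\|_{L^{2}(0,T-h;Z)},
\end{equation*}
which is made arbitrarily small by first choosing $\varepsilon$ small and then sending $h\to 0$.

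The main obstacle I anticipate is justifying the integral representation $u-u_{0}=I^{\alpha}\partial_{t}^{\alpha}u$ rigorously at the regularity provided by the hypotheses, since the regularized Caputo derivative is defined as a distributional time derivative of a convolution. Once this identity is validated pointwise a.e.\ in $t$ and in the Bochner sense in $Z$, the remaining argument reduces to elementary harmonic analysis combined with the vector-valued Kolmogorov criterion and the Ehrling inequality.
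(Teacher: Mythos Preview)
The paper does not supply a proof of this lemma; it is quoted from Li--Liu \cite{li2018some} and used as a black box in the existence argument (Theorem~\ref{30-3-23-1}). There is therefore no in-paper proof to compare your proposal against.

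That said, your outline is the standard route and is essentially how the cited result is established. Two minor remarks. First, once you have the uniform $L^{2}(0,T;X)$ bound and the equicontinuity of translates in $L^{2}(0,T-h;Z)$, Simon's compactness theorem applied directly to the triple $X\hookrightarrow Y\hookrightarrow Z$ already delivers relative compactness in $L^{2}(0,T;Y)$; the Ehrling step is buried inside Simon's proof, so you need not carry it out by hand (though doing so is harmless). Second, the obstacle you flag is indeed the only genuine subtlety: one must read the hypothesis $\partial_{t}^{\alpha}u\in L^{2}(0,T;Z)$ as saying that $k\ast(u-u_{0})$ is absolutely continuous with values in $Z$ and vanishing trace at $t=0$, then convolve with $l(t)=t^{\alpha-1}/\Gamma(\alpha)$ and use $(k\ast l)(t)\equiv 1$ to recover $u-u_{0}=l\ast\partial_{t}^{\alpha}u$ in $Z$. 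In Li--Liu this is packaged into their notion of weak Caputo derivative, and for the application in the present paper the Galerkin approximants $u_{m}$ are smooth in $t$, so the representation holds classically. Note also that $u_{0}$ cancels in the difference $u(t+h)-u(t)$, so no separate control on the initial data is required for the translation estimate.
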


\begin{lem}\label{29-3-23-5}  \cite[Propoition 1.3]{pedregal1997parametrized}
A sequence of functions $\{f_{j}\}_{j=1}^{\infty}$ is relatively compact in $L^{1}[0,T]$ iff there exists a constant $C>0$ such that $\|f_{j}\|_{L^{1}(0,T)}\leq C~\forall~j$ and for every $\epsilon>0$ there exists a constant  $\delta=\delta(\epsilon)>0$ such that for all measurable subset E with $|E|<\delta$, we have 
\begin{equation}
\int_{E}|f_{j}(t)|~dt<\epsilon \quad \text{uniformly}~\forall~j.
\end{equation}
\end{lem}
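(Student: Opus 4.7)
The plan is to recognise this statement as a version of the Dunford--Pettis theorem characterising (weak) relative compactness in $L^1[0,T]$ via uniform boundedness together with equi-integrability (uniform absolute continuity of the integrals). I would prove each implication separately.

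For the ``only if'' direction, the uniform $L^1$-bound is immediate, since any relatively compact set in a normed space is bounded. To derive the uniform absolute continuity condition I would argue by contradiction: suppose there exist $\epsilon_0>0$, a subsequence $\{f_{j_k}\}$, and measurable sets $E_k\subset[0,T]$ with $|E_k|<1/k$ but $\int_{E_k}|f_{j_k}|\,dt\geq\epsilon_0$. Using the compactness hypothesis, extract a further subsequence converging weakly to some $f\in L^1[0,T]$. Since a single integrable function is uniformly absolutely continuous (by dominated convergence applied to $|f|\mathbf{1}_{\{|f|>N\}}$), the Vitali--Hahn--Saks theorem applied to the scalar measures $A\mapsto \int_A|f_{j_k}|\,dt$ transfers this property to the tail of the sequence, contradicting the choice of $E_k$.

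For the ``if'' direction, which is the substantive content, I would embed $L^1[0,T]$ isometrically into the space $M[0,T]$ of finite signed Borel measures via $f\mapsto f\,dt$. The $L^1$-bound transfers to a bound in total variation, so the Banach--Alaoglu theorem applied to $M[0,T]\subset C([0,T])^{*}$ extracts a weak-$\ast$ convergent subsequence $f_{j_k}\,dt \stackrel{\ast}{\rightharpoonup} \mu$. The equi-absolute continuity hypothesis passes to the limit and forces $\mu \ll dt$, so the Radon--Nikodym theorem furnishes $g\in L^1[0,T]$ with $d\mu = g\,dt$. To upgrade weak-$\ast$ convergence against $C([0,T])$ to weak convergence in $L^1[0,T]$ (i.e.\ testing against $L^{\infty}$), I would invoke Lusin's theorem to approximate an arbitrary $\phi\in L^{\infty}$, off a set of small measure, by a continuous function of comparable sup-norm; the residual error is then controlled uniformly in $k$ exactly by the equi-integrability of $\{f_{j_k}\}$ together with that of the single function $g$.

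The main obstacle is expected to be this last promotion step from $C([0,T])$-testing to $L^{\infty}$-testing, since weak-$\ast$ convergence of measures is strictly weaker than weak convergence in $L^1$, and bridging the gap is precisely where the equi-integrability hypothesis is indispensable: it is needed to absorb the Lusin error uniformly in the index. Once this is handled, every subsequence of $\{f_j\}$ admits a further subsequence converging weakly in $L^1[0,T]$, which by the Eberlein--\v Smulian theorem is equivalent to weak relative compactness and closes the argument.
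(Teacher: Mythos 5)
The paper offers no proof of this lemma: it is imported verbatim as Proposition 1.3 of Pedregal's book, i.e.\ the Dunford--Pettis criterion, so there is no in-paper argument to compare yours against. Your outline is the standard proof of that theorem (boundedness plus Vitali--Hahn--Saks for necessity; Banach--Alaoglu in $C([0,T])^{*}$, absolute continuity of the limit measure, Radon--Nikodym, and a Lusin-plus-equi-integrability upgrade from $C([0,T])$-testing to $L^{\infty}$-testing for sufficiency, closed by Eberlein--\v Smulian), and as a proof of Dunford--Pettis it is essentially sound. Two remarks. First, and most importantly, the equivalence you prove is for \emph{weak} relative compactness, and that is all it can be: for the norm topology the ``if'' direction is false (e.g.\ $f_{j}(t)=\sin(jt)$ is bounded and equi-integrable but has no norm-convergent subsequence). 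Your proof correctly delivers only weak relative compactness --- which is what Pedregal actually asserts --- whereas the lemma as printed omits the word ``weakly,'' and the paper subsequently uses it to extract a subsequence with $M\left(\|u_{m}(t)\|^{2}_{X_{0}(\Omega)}\right)\rightarrow \zeta$ \emph{in} $L^{1}(0,T)$, i.e.\ in norm. That step is not justified by this criterion, so your reading exposes a genuine issue in the paper rather than a gap in your argument. Second, a small repair in your ``only if'' direction: weak convergence of $f_{j_{k}}$ gives setwise convergence of the signed measures $\mu_{k}(A)=\int_{A}f_{j_{k}}\,dt$ (test against $\mathbf{1}_{A}$), not of $A\mapsto\int_{A}|f_{j_{k}}|\,dt$; apply Vitali--Hahn--Saks to $\{\mu_{k}\}$ and then transfer uniform absolute continuity to the total variations via $|\mu_{k}|(A)\leq 2\sup_{B\subset A}|\mu_{k}(B)|$.
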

The main results of this article are given below.
\begin{thm}
Suppose that (H1)-(H3) hold. Then there exists a unique weak solution $u$ \eqref{weak form 1} to the equation \eqref{problem statement} which satisfies the following a priori bounds 
\begin{align}
 \|u\|_{L^{\infty}(0,T;L^{2}(\Omega))}^{2}+\|u\|^{2}_{L^{2}_{\alpha}(0,T;X_{0}(\Omega))}&\lesssim \|u_{0}\|^{2}+\|f\|^{2}_{L^{2}_{\alpha}(0,T;L^{2}(\Omega))}.\\
 \|u\|_{L^{\infty}(0,T;X_{0}(\Omega))}^{2}+\|(-\Delta)^{s}u\|^{2}_{L^{2}_{\alpha}(0,T;L^{2}(\Omega))}&\lesssim \|u_{0}\|^{2}_{X_{0}(\Omega)}+\|f\|^{2}_{L^{2}_{\alpha}(0,T;L^{2}(\Omega))}.\\
 \|\partial^{\alpha}_{t}u\|^{2}_{L^{2}(0,T;L^{2}(\Omega))}
    &\lesssim \|u_{0}\|^{2}_{X_{0}(\Omega)}+\|f\|^{2}_{L^{2}_{\alpha}(0,T;L^{2}(\Omega))}.
\end{align}
\end{thm}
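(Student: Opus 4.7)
The plan is to prove existence via the Faedo--Galerkin method, extracting the three a priori bounds along the way, and then to obtain uniqueness by a standard difference argument combined with the fractional Grönwall inequality (Lemma \ref{Gronwall inequality}). First I would fix an orthonormal basis $\{\phi_j\}_{j\geq 1}$ of $L^2(\Omega)$ consisting of eigenfunctions of $(-\Delta)^s$ in $X_0(\Omega)$, set $V_n=\operatorname{span}\{\phi_1,\ldots,\phi_n\}$, and seek $u_n(t)=\sum_{j=1}^n c_j^n(t)\phi_j$ satisfying the projected equation $(\partial^\alpha_t u_n,\phi_j)+M(\|u_n\|_{X_0}^2)(u_n,\phi_j)_{X_0}=(f,\phi_j)+\int_0^t(b(\cdot,t,\tau)u_n(\tau),\phi_j)\,d\tau$ with $u_n(0)$ the $X_0$-projection of $u_0$ onto $V_n$. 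This yields a system of nonlinear Caputo fractional ODEs for $c^n=(c_1^n,\ldots,c_n^n)$; local continuous existence follows from Theorem \ref{13-5-1}, and the a priori bounds below will upgrade it to a global solution on $[0,T]$.

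Next I would derive the three bounds in succession on $u_n$. For the first, test the projected equation against $u_n(t)$, apply Lemma \ref{Positivity of fractional derivative} with $\tilde k=k$ and $v=u_n-u_0$ (so that the Caputo term produces $\frac12\frac{d}{dt}(k\ast\|u_n-u_0\|^2)$ plus nonnegative remainders), use $M\geq m_0$ for the diffusion term, control the memory term using \eqref{28-3-24-4} together with Cauchy--Schwarz and a small $\varepsilon$-Young inequality, convolve with $t^{\alpha-1}/\Gamma(\alpha)$ to invert $k$, and close with Lemma \ref{Gronwall inequality}. For the second bound, test instead against $(-\Delta)^s u_n(t)$; the Kirchhoff term produces $M(\|u_n\|_{X_0}^2)\|(-\Delta)^s u_n\|^2$, the Caputo term gives $\frac12\frac{d}{dt}(k\ast\|u_n-u_0\|_{X_0}^2)$ via Lemma \ref{Positivity of fractional derivative} applied in $X_0(\Omega)$, and the memory and source terms are absorbed; this is exactly the step where the smallness assumption $m_0-4L_MK^2>0$ from \textbf{(H2)} is used to keep a positive coefficient in front of $\|(-\Delta)^s u_n\|^2$ after the Lipschitz estimate on $M$ coming from the first bound. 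The third bound then follows by solving the equation for $\partial^\alpha_t u_n$ and taking $L^2(0,T;L^2(\Omega))$ norms, using the bounds on $M$ (bounded along $u_n$), on $(-\Delta)^s u_n$ from the second bound, and on $f$.

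The main obstacle is passage to the limit, because the Kirchhoff coefficient $M(\|u_n(t)\|_{X_0}^2)$ and the memory operator through \eqref{28-3-24-1} both force us to have strong convergence of $u_n$ not merely in $L^2(0,T;L^2(\Omega))$ but in $L^2(0,T;X_0(\Omega))$. I would obtain strong convergence in $L^2(0,T;L^2(\Omega))$ first by applying Lemma \ref{13-5-3} with $X=X_0(\Omega)$, $Y=L^2(\Omega)$, $Z=X_0(\Omega)^\ast$, whose hypotheses are exactly the first and third a priori bounds. To upgrade this to $L^2(0,T;X_0(\Omega))$ I would test the equation satisfied by $u_n-u_m$ against $(-\Delta)^s(u_n-u_m)$, reproduce the second-bound computation but now on the difference, and use the smallness condition together with Lemma \ref{Gronwall inequality} to conclude that $\{u_n\}$ is Cauchy in $L^2(0,T;X_0(\Omega))$ (here the $L^1$-equi-integrability refinement of Lemma \ref{29-3-23-5} helps handle the pointwise-in-time integrand produced by convolving against $t^{\alpha-1}$). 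With this stronger convergence, $\|u_n(t)\|_{X_0}^2\to\|u(t)\|_{X_0}^2$ for a.e.\ $t$, so the continuity of $M$ yields $M(\|u_n\|_{X_0}^2)(-\Delta)^s u_n\rightharpoonup M(\|u\|_{X_0}^2)(-\Delta)^s u$ in $L^2_\alpha(0,T;L^2(\Omega))$; weak lower semicontinuity then transfers the three a priori bounds from $u_n$ to the limit $u$.

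For uniqueness I would take two weak solutions $u_1,u_2$ with the same data, write the equation for $w=u_1-u_2$, test with $w$, apply Lemma \ref{Positivity of fractional derivative}, use the Lipschitz property of $M$ together with the $L^\infty(0,T;X_0)$ bound to control the nonlinear diffusion difference, handle the memory contribution via \eqref{28-3-24-4} and Cauchy--Schwarz, convolve with $t^{\alpha-1}/\Gamma(\alpha)$ to remove the kernel $k$, and conclude $w\equiv 0$ via Lemma \ref{Gronwall inequality}.
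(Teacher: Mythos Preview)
Your overall architecture is right, but there is one genuine technical gap and one misidentification of where hypothesis \textbf{(H2)} is actually used.

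The gap is in your application of Lemma~\ref{Positivity of fractional derivative}. You write ``apply Lemma~\ref{Positivity of fractional derivative} with $\tilde k=k$'', but the kernel $k(t)=t^{-\alpha}/\Gamma(1-\alpha)$ is \emph{not} in $W^{1,1}[0,T]$, so the lemma is not directly applicable. The paper handles this by the Yosida approximation: one replaces $k$ by a sequence $k_p\in W^{1,1}[0,T]$ with $k_p\to k$ in $L^1[0,T]$ and $\frac{d}{dt}(k_p\ast v)\to\frac{d}{dt}(k\ast v)$ in $L^2(0,T;L^2(\Omega))$, applies Lemma~\ref{Positivity of fractional derivative} to $k_p$, convolves with $l(t)=t^{\alpha-1}/\Gamma(\alpha)$, and only then passes $p\to\infty$. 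Without this device all three of your a priori bounds are formally unjustified.

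Second, you place the smallness condition $m_0-4L_MK^2>0$ in the derivation of the second a priori bound. In fact that bound requires only $M\ge m_0$: testing with $(-\Delta)^s u_n$ gives the coercive term $M(\|u_n\|_{X_0}^2)\|(-\Delta)^s u_n\|^2\ge m_0\|(-\Delta)^s u_n\|^2$ directly, with no Lipschitz estimate on $M$ entering. The smallness hypothesis is used by the paper only in the \emph{uniqueness} step, exactly as in your final paragraph, after writing $M(\|u_1\|_{X_0}^2)-M(\|u_2\|_{X_0}^2)$ and invoking the $L^\infty(0,T;X_0)$ bound.

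Finally, your passage to the limit differs from the paper's. You propose to show $\{u_n\}$ is Cauchy in $L^2(0,T;X_0(\Omega))$ by testing the difference equation with $(-\Delta)^s(u_n-u_m)$; but for $n<m$ this test function lies in $V_m$ and is not admissible in the equation for $u_n$, so the subtraction does not go through without an additional projection argument. The paper instead first extracts $u_m\to u$ in $L^2(0,T;L^2(\Omega))$ via Lemma~\ref{13-5-3}, passes to a preliminary limit equation with an unidentified weak-$\ast$ limit $\eta$ of $M(\|u_m\|_{X_0}^2)u_m$, and then proves $u_m\to u$ in $X_0(\Omega)$ for a.e.\ $t$ by a monotonicity-type trick: one expands $M(\|u_m\|_{X_0}^2)\|u_m-u\|_{X_0}^2$, substitutes the Galerkin equation with $v_m=u_m$, and shows the resulting expression tends to zero using only the already-established convergences. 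This avoids comparing two Galerkin levels directly.
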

 \begin{thm}\label{29-3-23-22-1}  Under the assumptions (H1)-(H3),  we have the following regularity estimate on  the solution $u$ of the problem \eqref{problem statement}
 \begin{equation}\label{29-3-23-21-1-1}
 \|u\|_{L^{2}(0,T;H^{s+\nu}(\Omega))} \lesssim \|u_{0}\|_{X_{0}(\Omega)}+\|f\|_{L^{2}_{\alpha}(0,T;L^{2}(\Omega))},
 \end{equation}
  where $\nu=\min\left\{s,\frac{1}{2}-\epsilon\right\}$ for some small $\epsilon >0$.
 \end{thm}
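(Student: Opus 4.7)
The plan is to combine the a priori bounds furnished by the existence theorem above, namely
$\|u\|_{L^{\infty}(0,T;X_{0}(\Omega))}\lesssim K$ and $\|(-\Delta)^{s}u\|_{L^{2}_{\alpha}(0,T;L^{2}(\Omega))}\lesssim K$ with
$K:=\|u_{0}\|_{X_{0}(\Omega)}+\|f\|_{L^{2}_{\alpha}(0,T;L^{2}(\Omega))}$, together with an elliptic regularity estimate for the Dirichlet problem of the fractional Laplacian, and then to recover the required $L^{2}$-in-time integrability by a Hölder argument.

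The first step will be to establish the pointwise-in-$t$ interpolation inequality
\[
\|u(t)\|_{H^{s+\nu}(\Omega)} \lesssim \|u(t)\|_{X_{0}(\Omega)}^{1-\nu/\sigma}\,\|(-\Delta)^{s}u(t)\|_{L^{2}(\Omega)}^{\nu/\sigma}
\]
for some auxiliary parameter $\sigma$ with $\nu<\sigma<\min\{s,\tfrac{1}{2}\}$. This is obtained by interpolating between $X_{0}(\Omega)$ and $H^{s+\sigma}(\Omega)$ after using the Grubb/Ros-Oton--Serra elliptic regularity for the fractional Dirichlet problem, which gives $\|v\|_{H^{s+\sigma}(\Omega)}\lesssim \|(-\Delta)^{s}v\|_{L^{2}(\Omega)}+\|v\|_{L^{2}(\Omega)}$ for every $v\in X_{0}(\Omega)$ provided $\sigma<\min\{s,\tfrac{1}{2}\}$. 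The threshold $\sigma<1/2$ is sharp: beyond $s+\sigma=1/2$ the zero-extension of $u$ generally fails to lie in $H^{s+\sigma}(\mathbb{R}^{d})$ because of boundary-trace obstructions intrinsic to the Dirichlet fractional Laplacian, and this is precisely what forces the cap at $1/2-\epsilon$ in the statement.

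The second step will be to square and integrate the interpolation inequality in $t$, extract the $L^{\infty}$-bounded factor, and apply Hölder's inequality with exponents $\sigma/\nu$ and $\sigma/(\sigma-\nu)$ to the remaining time integral:
\[
\int_{0}^{T}\|(-\Delta)^{s}u(t)\|^{2\nu/\sigma}dt \leq \Bigl(\int_{0}^{T}(T-t)^{\alpha-1}\|(-\Delta)^{s}u(t)\|^{2}dt\Bigr)^{\nu/\sigma}\Bigl(\int_{0}^{T}(T-t)^{(1-\alpha)\nu/(\sigma-\nu)}dt\Bigr)^{(\sigma-\nu)/\sigma}.
\]
The first factor is controlled by $K^{2\nu/\sigma}$ on taking $t=T$ in the definition of $\|\cdot\|_{L^{2}_{\alpha}}$, while the second is a finite constant depending only on $T,\alpha,\nu,\sigma$ since $(1-\alpha)\nu/(\sigma-\nu)>0$. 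Assembling the pieces gives $\|u\|_{L^{2}(0,T;H^{s+\nu}(\Omega))}\lesssim K^{1-\nu/\sigma}\cdot K^{\nu/\sigma}=K$; choosing, for instance, $\sigma=\min\{s,\tfrac{1}{2}-\epsilon/2\}$ and $\nu=\min\{s,\tfrac{1}{2}-\epsilon\}$ delivers the claimed estimate.

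The principal obstacle will be the elliptic regularity step: the sharp regularity loss of $\min\{s,1/2\}$ derivatives for the Dirichlet fractional Laplacian is intrinsic and cannot be improved in general. A secondary, but unavoidable, technical point is the passage from the weighted $L^{2}_{\alpha}$-bound on $(-\Delta)^{s}u$ to an $L^{p}$-in-time bound with $p=2\nu/\sigma<2$; this forces the strict inequality $\nu<\sigma$ and is precisely the reason the small parameter $\epsilon>0$ appears in the definition of $\nu$.
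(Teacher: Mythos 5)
Your proof rests on the same essential ingredient as the paper's: the elliptic regularity estimate $\|v\|_{H^{s+\nu}(\Omega)}\lesssim\|(-\Delta)^{s}v\|_{L^{2}(\Omega)}$ for the Dirichlet fractional Laplacian with $\nu=\min\{s,\tfrac12-\epsilon\}$ (the paper's Theorem \ref{12-1-24-4}, from Acosta--Bersetche--Borthagaray). The paper applies it directly, pointwise in $t$, to $g=M\left(\|u(t)\|^{2}_{X_{0}(\Omega)}\right)^{-1}\left(-\partial^{\alpha}_{t}u+f+\int_{0}^{t}b\,u\,d\tau\right)$, which lies in $L^{2}(0,T;L^{2}(\Omega))$ by the a priori bounds of Lemmas \ref{21-4-9}--\ref{21-4-7}, and then integrates in time; that is the whole proof. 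Your interpolation-plus-H\"older detour is aimed at a problem that does not exist: Lemma \ref{21-4-8} already contains the \emph{unweighted} bound \eqref{32-5y}, $\|(-\Delta)^{s}u\|^{2}_{L^{2}(0,T;L^{2}(\Omega))}\lesssim K^{2}$, and even if only the weighted bound were available, $(T-t)^{\alpha-1}\geq T^{\alpha-1}$ on $(0,T)$ gives $\int_{0}^{T}\|(-\Delta)^{s}u\|^{2}\,dt\leq T^{1-\alpha}\|(-\Delta)^{s}u\|^{2}_{L^{2}_{\alpha}(0,T;L^{2}(\Omega))}$ at once. So your closing claim that one is ``forced'' to work with $p=2\nu/\sigma<2$ is false, and the $\epsilon$ in $\nu$ has nothing to do with H\"older bookkeeping; it comes solely from the elliptic regularity threshold at $\tfrac12$.

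There is also a genuine gap in the regime $s\leq\tfrac12-\epsilon$. There $\nu=\min\{s,\tfrac12-\epsilon\}=s$, and no auxiliary $\sigma$ with $\nu<\sigma<\min\{s,\tfrac12\}=s$ exists; your proposed choice $\sigma=\min\{s,\tfrac12-\epsilon/2\}$ collapses to $\sigma=\nu$, so the H\"older exponent $\sigma/(\sigma-\nu)$ is undefined and the interpolation inequality degenerates. The fix is exactly the observation above: drop the interpolation, apply the elliptic estimate with the full gain $\nu$, and use the $L^{2}$-in-time bound on $(-\Delta)^{s}u$. With that simplification your argument becomes the paper's.
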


 \begin{thm}\label{13-1-24-1-1} Suppose that (H1)-(H3) hold. Moreover, $(-\Delta)^{s}u_{0}\in L^{2}(\Omega)$ and $f(t)\in X_{0}(\Omega)$ such that  $\|(-\Delta)^{s}f(t)\|$ is bounded for every $t\in [0,T]$. Then, the solution $u$ of the problem  \eqref{problem statement} satisfies the following estimates 
\begin{align}
\|u(t)\|&\lesssim ~(1+t^{\alpha})\quad \forall~t\in [0,T] \label{12-1-24-16-1}.\\
\|u(t)-u(t^{\ast})\| &\lesssim ~|t-t^{\ast}|^{\alpha}\quad \forall~t,t^{\ast}\in [0,T]\label{12-1-24-15-1}.\\
\|\partial^{\alpha}_{t}u\|&\lesssim~ C\quad \forall~t\in [0,T]\label{12-1-24-15A-1}.
\end{align}
\end{thm}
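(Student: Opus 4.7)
The plan is to work from the implicit representation of \eqref{problem statement} obtained by applying the Riemann--Liouville integral $I^{\alpha}$ to both sides,
\begin{equation*}
u(t) = u_{0} - I^{\alpha}\!\bigl[M(\|u\|_{X_{0}}^{2})(-\Delta)^{s}u\bigr](t) + I^{\alpha}f(t) + I^{\alpha}\!\Bigl[\int_{0}^{\cdot}b(x,\cdot,\tau)u(\tau)\,d\tau\Bigr](t),
\end{equation*}
and then to extract the three estimates from this formula together with the a priori bounds established in the existence theorem, interpolating with the enhanced data hypotheses as needed.

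For \eqref{12-1-24-16-1}, I would take $L^{2}$-norms in the representation above. The coefficient $M$ is bounded above because $\|u(t)\|_{X_{0}}$ is already controlled. The term $I^{\alpha}[M(-\Delta)^{s}u](t)$ is then handled by the Cauchy--Schwarz inequality together with the $L^{2}_{\alpha}(0,T;L^{2}(\Omega))$ bound on $(-\Delta)^{s}u$, producing a factor $t^{\alpha/2}\lesssim 1+t^{\alpha}$. The enhanced hypothesis that $\|(-\Delta)^{s}f(t)\|$ is bounded, together with the Poincar\'{e}-type inequality $\|f(t)\|\lesssim \|(-\Delta)^{s}f(t)\|$ on the bounded domain $\Omega$, yields $\|f(t)\|\lesssim 1$ uniformly and hence $\|I^{\alpha}f(t)\|\lesssim t^{\alpha}$; the memory contribution is treated similarly via \eqref{28-3-24-4} and a Fubini swap.

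Estimates \eqref{12-1-24-15-1} and \eqref{12-1-24-15A-1} require the strictly stronger information that $\sup_{t\in[0,T]}\|(-\Delta)^{s}u(t)\|<\infty$. I would obtain it by applying $(-\Delta)^{s}$ to the PDE (which commutes with $\partial_{t}^{\alpha}$), testing with a suitable shift of $(-\Delta)^{s}u$, and invoking Lemma \ref{Positivity of fractional derivative} with $v=(-\Delta)^{s}u-(-\Delta)^{s}u_{0}$ to derive an inequality of the form
\begin{equation*}
\partial_{t}^{\alpha}\|(-\Delta)^{s}u - (-\Delta)^{s}u_{0}\|^{2} \lesssim \|(-\Delta)^{s}u\|^{2} + \|(-\Delta)^{s}f\|^{2} + (\text{memory terms}),
\end{equation*}
after discarding the nonnegative term $M\|(-\Delta)^{3s/2}u\|^{2}$ and applying Young's inequality on the right-hand side. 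Integrating by $I^{\alpha}$ and then applying the fractional Gr\"{o}nwall Lemma \ref{Gronwall inequality}, together with the Mittag--Leffler bound of Lemma \ref{8-1-24-2}, will produce the desired $L^{\infty}$-in-time control. With this bound in hand, \eqref{12-1-24-15-1} follows by writing, for $t^{\ast}<t$,
\begin{equation*}
I^{\alpha}g(t)-I^{\alpha}g(t^{\ast}) = \frac{1}{\Gamma(\alpha)}\int_{t^{\ast}}^{t}(t-\tau)^{\alpha-1}g(\tau)\,d\tau + \frac{1}{\Gamma(\alpha)}\int_{0}^{t^{\ast}}\!\bigl[(t-\tau)^{\alpha-1}-(t^{\ast}-\tau)^{\alpha-1}\bigr]g(\tau)\,d\tau
\end{equation*}
with $g\in\bigl\{M(-\Delta)^{s}u,\;f,\;\int_{0}^{\cdot}bu\,d\tau\bigr\}$: the uniform $L^{\infty}$-bound on each such $g$ makes both integrals bounded by $C|t-t^{\ast}|^{\alpha}$, the second because $(t^{\ast}-\tau)^{\alpha-1}-(t-\tau)^{\alpha-1}\geq 0$ and integrates explicitly against a constant. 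Finally, \eqref{12-1-24-15A-1} is immediate by reading \eqref{problem statement} pointwise and bounding each term on the right with the same uniform estimates.

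The hard part will be the uniform-in-time bound on $\|(-\Delta)^{s}u(t)\|$, which is a genuine strengthening of the $L^{2}_{\alpha}$ control that is already available, and is exactly where the extra hypotheses $(-\Delta)^{s}u_{0}\in L^{2}(\Omega)$ and $\sup_{t}\|(-\Delta)^{s}f(t)\|<\infty$ enter. Rigorously deriving the fractional differential inequality above will require descending to the Galerkin level so that pairing with $(-\Delta)^{2s}u$ (equivalently, testing the $(-\Delta)^{s}$-transformed equation with $(-\Delta)^{s}u-(-\Delta)^{s}u_{0}$) is legitimate, and the Kirchhoff coefficient $M(\|u\|_{X_{0}}^{2})$ and the memory operator must be shown to interact compatibly with these higher-order test functions.
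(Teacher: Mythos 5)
Your plan coincides with the paper's for everything except the step you yourself single out as the crux --- the uniform-in-time bound $\sup_{t\in[0,T]}\|(-\Delta)^{s}u(t)\|<\infty$ --- and there you take a genuinely different route. The paper does not run an energy estimate on $\|(-\Delta)^{s}u\|^{2}$: it proves Theorem \ref{13-1-24-1} for $f=b=0$ by expanding $u=\sum_{i}\alpha_{i}(t)\phi_{i}$ in the eigenbasis of \eqref{20-1-24-1}, reducing to the scalar fractional ODEs \eqref{12-1-24-6}, passing to the Volterra form \eqref{12-1-24-7}, applying the scalar fractional Gr\"{o}nwall inequality (Lemma \ref{Gronwall inequality}) mode by mode to get $|\alpha_{i}(t)|\lesssim|\alpha_{i}(0)|$, and summing against $\lambda_{j}^{2}$ to obtain \eqref{12-1-24-11}; the case of nonzero $f$ and $b$ is only addressed in the closing remark of Section \ref{7-1-24-3}. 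Your energy route (Galerkin level, Yosida regularization, Lemma \ref{Positivity of fractional derivative}, drop the sign-definite Kirchhoff term, fractional Gr\"{o}nwall) is arguably more robust on one point: the paper's mode-by-mode argument bounds $|\alpha_{i}(t)|$ by $|\alpha_{i}(0)|E_{\alpha}(\lambda_{i}Ct^{\alpha})$ with a \emph{positive} argument, where the decay estimate of Lemma \ref{8-1-24-2} does not apply, so it implicitly relies on the favourable sign of $-\lambda_{i}M$ that was discarded when taking absolute values; your approach keeps the dissipative term with its correct sign from the outset. The remaining steps --- the Volterra representation \eqref{12-1-24-13}, the kernel splitting $(t^{\ast}-\tau)^{\alpha-1}-(t-\tau)^{\alpha-1}\geq 0$ for the H\"older estimate, and reading the equation pointwise for \eqref{12-1-24-15A-1} --- are identical to the paper's.

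The one concrete gap in your plan is the $b_{0}$ part of the memory operator \eqref{28-3-24-1}. After applying $(-\Delta)^{s}$ to the equation (equivalently, testing with $(-\Delta)^{2s}u$ at the Galerkin level), the memory contribution contains $\int_{0}^{t}\bigl(b_{0}(x,t,\tau)u(\tau),(-\Delta)^{2s}u(t)\bigr)\,d\tau$. Since $b_{0}$ is only assumed continuous, you cannot transfer half of the operator onto $b_{0}u(\tau)$, and estimating the pairing directly would require $\|(-\Delta)^{2s}u(t)\|$, which is not controlled; by contrast the $\beta(-\Delta)^{s}u(\tau)$ part is harmless, since it can be absorbed into the retained $m_{0}\|(-\Delta)^{3s/2}u\|^{2}$ term by Young's inequality followed by a Gr\"{o}nwall argument on the convolved quantity. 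So your argument closes as stated only when $b_{0}=0$. The paper's spectral argument faces the same obstruction (summing its coefficient bounds against $\lambda_{i}^{2}$ would require control of $\|(-\Delta)^{s}(b_{0}u)\|$) and sidesteps it by proving only the $f=b=0$ case and deferring the rest to a remark; but since the statement you were given includes a general $b_{0}$, you should either impose that multiplication by $b_{0}(\cdot,t,\tau)$ is bounded on the relevant higher-order space or restrict the claim accordingly.
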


\section{Existence and uniqueness of weak  solutions to $(\mathcal{K}^{s}_{\alpha})$}\label{7-1-24-2}
In this section, we prove the existence and uniqueness of the weak solution to the problem  \eqref{problem statement} by Galerkin method. In this method, first we consider the weak formulation \eqref{weak form 1}  of finding its solutions in a finite dimensional subspace of $X_{0}(\Omega)$. Then the existence Theorem \ref{13-5-1} for fractional differential equation  provides a Galerkin sequence of solutions of finite dimensional problems. We derive  various types of a priori bounds on the Galerkin sequence of solutions that help us to apply compactness Lemma \ref{13-5-3} to ensure the convergence in $L^{2}(0,T;L^{2}(\Omega))$. Further using Lemma \ref{29-3-23-5}, we prove that the Galerkin sequence also converges in $L^{2}(0,T;X_{0}(\Omega))$. As a consequence of these convergence results, we pass the limit inside the finite dimensional problems to   conclude the existence of the weak solutions to \eqref{problem statement}. Uniqueness of the weak solution is proved by using Lipschitz continuity  of the diffusion coefficient $M$.
\par The weak formulation corresponding to the problem  \eqref{problem statement} is to find  $u \in L^{\infty}(0,T;L^{2}(\Omega))\cap L^{2}(0,T;X_{0}(\Omega))$ and $\partial^{\alpha}_{t}u \in L^{2}(0,T;L^{2}(\Omega))$ 
such that the following equations hold for all $v \in X_{0}(\Omega)$ and a.e. $t \in (0,T]$ 
\begin{equation}\label{weak form 1}
\begin{aligned}
\left(\partial^{\alpha}_{t}u,v\right)+M\left(\|u(t)\|^{2}_{X_{0}(\Omega)}\right)(u,v)_{X_{0}(\Omega)}&=(f,v)+\int_{0}^{t}B(t,\tau,u(\tau),v)~d\tau\\
u(x,0)&=u_{0}(x)~~~\text{in}~~~\Omega,
    \end{aligned}
\end{equation}
where $B(t,\tau,u(\tau),v)$ is given by 
\begin{equation}
B(t,\tau,u(\tau),v)= \beta(u(\tau),v)_{X_{0}(\Omega)}+(b_{0}(x,t,\tau)u(\tau),v)\quad \forall~u,v \in X_{0}(\Omega)~\text{and}~\forall~t,\tau \in [0,T].
\end{equation}
Using the continuity of  $b_{0}(x,t,\tau)$, it can be easily verified that there exists a positive constant $B_{0}$ such that 
\begin{equation}\label{8-1-24-1}
|B(t,\tau,u(\tau),v)| \leq B_{0}\|u(\tau)\|_{X_{0}(\Omega)}\|v\|_{X_{0}(\Omega)}\quad \forall~u,v \in X_{0}(\Omega)~\text{and}~\forall~t,\tau \in [0,T].
\end{equation}
First, we prove the following a priori bounds  on the solution to the problem \eqref{problem statement}.
\begin{lem}\textbf{(A priori bound 1)}\label{21-4-9}
Suppose that  (H1)-(H3) hold. Then the solution $u$ of the problem \eqref{problem statement} satisfies the following a priori bound
\begin{equation}\label{20-4-1}
    \|u\|_{L^{\infty}(0,T;L^{2}(\Omega))}^{2}+\|u\|^{2}_{L^{2}_{\alpha}(0,T;X_{0}(\Omega))}\lesssim \|u_{0}\|^{2}+\|f\|^{2}_{L^{2}_{\alpha}(0,T;L^{2}(\Omega))}.
    \end{equation}
\end{lem}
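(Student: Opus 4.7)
The plan is to test the weak formulation \eqref{weak form 1} with $v = u(t)$, derive a pointwise-in-$t$ energy inequality, and then apply the Riemann--Liouville integral $I^{\alpha} f(t) = \frac{1}{\Gamma(\alpha)}\int_{0}^{t}(t-\tau)^{\alpha-1}f(\tau)\,d\tau$ to convert it into the norms $\|\cdot\|_{L^{\infty}(0,T;L^{2}(\Omega))}$ and $\|\cdot\|_{L^{2}_{\alpha}(0,T;X_{0}(\Omega))}$ appearing on the left-hand side of \eqref{20-4-1}.

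Writing $\partial^{\alpha}_{t}u = \frac{d}{dt}(k*(u-u_{0}))$ and applying Lemma \ref{Positivity of fractional derivative} with $\tilde{k}=k$ and $v = u - u_{0}$, then accounting for the cross term produced by the splitting $u = (u-u_{0})+u_{0}$, one obtains a Caputo-type chain-rule inequality of the form $\tfrac{1}{2}\partial^{\alpha}_{t}\|u(t)\|^{2}\le (\partial^{\alpha}_{t}u(t),u(t))$, up to nonnegative terms. Combining this with the coercivity $M(\sigma)\geq m_{0}>0$ from (H2) and \eqref{weak form 1} tested with $v=u(t)$ gives the pointwise estimate
\begin{equation*}
\tfrac{1}{2}\,\partial^{\alpha}_{t}\|u(t)\|^{2} + m_{0}\|u(t)\|^{2}_{X_{0}(\Omega)} \;\leq\; (f(t), u(t)) + \int_{0}^{t}B(t,\tau,u(\tau),u(t))\,d\tau.
\end{equation*}

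For the right-hand side, Cauchy--Schwarz and Young handle $(f(t),u(t))$ with an appropriate weight. For the memory term, the bound \eqref{8-1-24-1} combined with Young's inequality (with parameter $\varepsilon \sim m_{0}$) yields
\begin{equation*}
\left|\int_{0}^{t}B(t,\tau,u(\tau),u(t))\,d\tau\right| \;\leq\; \tfrac{m_{0}}{2}\|u(t)\|^{2}_{X_{0}(\Omega)} + C(T,m_{0},B_{0})\int_{0}^{t}\|u(\tau)\|^{2}_{X_{0}(\Omega)}\,d\tau,
\end{equation*}
so that the first term is absorbed into the coercive $m_{0}\|u(t)\|^{2}_{X_{0}(\Omega)}$ on the left. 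I then apply $I^{\alpha}$ to the resulting inequality: since $I^{\alpha}\partial^{\alpha}_{t}\|u\|^{2}=\|u(t)\|^{2}-\|u_{0}\|^{2}$ and $I^{\alpha}\|u\|^{2}_{X_{0}(\Omega)}(t)$ is, up to the factor $\Gamma(\alpha)$, exactly the kernel in the definition \eqref{15-4-13} of the $L^{2}_{\alpha}$-norm, taking $\sup_{t\in(0,T)}$ produces both desired quantities on the left-hand side of \eqref{20-4-1}. On the right, $I^{\alpha}$ acting on the $(f,u)$ term splits via Cauchy--Schwarz into the contribution $\|f\|^{2}_{L^{2}_{\alpha}(0,T;L^{2}(\Omega))}$ plus a Gronwall-type remainder in $\|u\|^{2}$.

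The estimate is then closed by invoking the generalised fractional Gronwall inequality Lemma \ref{Gronwall inequality}. The main obstacle I anticipate is the memory term: because it couples $\|u(t)\|_{X_{0}(\Omega)}$ from the test factor to $\|u(\tau)\|_{X_{0}(\Omega)}$ inside the integral, the Young splitting must be tuned carefully to leave a strictly positive share of the coercive constant on the left, and once $I^{\alpha}$ is applied the double-convolution produced by the memory contribution must be reshaped into the single-kernel form required by Lemma \ref{Gronwall inequality}.
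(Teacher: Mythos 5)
Your overall route coincides with the paper's: test with $v=u(t)$, use the coercivity $M\geq m_{0}$, absorb the memory term by Young's inequality, convolve with the Riemann--Liouville kernel $l(t)=t^{\alpha-1}/\Gamma(\alpha)$ so that $(k\ast l)(t)=1$ produces $\|u(t)\|^{2}$ and $\bigl(l\ast\|u\|^{2}_{X_{0}(\Omega)}\bigr)(t)$ on the left, reshape the double convolution coming from the memory term into a single fractional integral, and close with Lemma \ref{Gronwall inequality} together with the Mittag--Leffler bound of Lemma \ref{8-1-24-2}. Your anticipated difficulty with the memory term is real but routine: the paper resolves it by writing $1=(t-\tau)^{1-\alpha}(t-\tau)^{\alpha-1}$ and bounding $(t-\tau)^{1-\alpha}\le T^{1-\alpha}$, which turns the iterated integral into the single-kernel form required by the Gronwall lemma.

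There is, however, one genuine gap at the very first step. You invoke Lemma \ref{Positivity of fractional derivative} with $\tilde k=k$, but that lemma requires $\tilde k\in W^{1,1}([0,T])$, and the kernel $k(t)=t^{-\alpha}/\Gamma(1-\alpha)$ is \emph{not} in $W^{1,1}[0,T]$: it is integrable, but its derivative $-\alpha t^{-\alpha-1}/\Gamma(1-\alpha)$ fails to be integrable near $t=0$. Hence the Caputo chain-rule inequality $\tfrac12\partial^{\alpha}_{t}\|u\|^{2}\le(\partial^{\alpha}_{t}u,u)$ cannot be obtained by a direct application of that lemma, and this is precisely the technical point the paper's proof is organised around. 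The repair is to replace $k$ by a Yosida-type approximation $k_{p}\in W^{1,1}[0,T]$, nonnegative and nonincreasing, with $k_{p}\rightarrow k$ in $L^{1}[0,T]$ and $\frac{d}{dt}(k_{p}\ast u)\rightarrow\frac{d}{dt}(k\ast u)$ in $L^{2}(0,T;L^{2}(\Omega))$; one applies Lemma \ref{Positivity of fractional derivative} at the level of $k_{p}$ (carrying a remainder $R_{p}$ and the explicitly computed term $\frac{d}{dt}(k_{p}\ast u_{0})(t)=k_{p}(t)u_{0}$, which is absorbed using the $k_{p}(t)\|u\|^{2}$ term the lemma supplies), convolves with $l$, and only then lets $p\rightarrow\infty$. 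Without this regularisation, or an equivalent density argument, the pointwise energy inequality from which your whole argument starts is unjustified; with it, the remainder of your plan goes through as written.
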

\begin{proof}
Put $v=u(t)$ in \eqref{weak form 1}, we get 
\begin{equation}\label{5}
    \begin{aligned}
    \left(\partial^{\alpha}_{t}u,u\right)&+M\left(\|u(t)\|^{2}_{X_{0}(\Omega)}\right)\|u\|^{2}_{X_{0}(\Omega)}=(f,u)+\int_{0}^{t}B(t,\tau,u(\tau),u(t))~d\tau.
    \end{aligned}
\end{equation}
By the definition of fractional derivative \eqref{Definition of fractional derivative}, the equation \eqref{5} is rewritten as 
\begin{equation}\label{6}
    \begin{aligned}
    \left(\frac{d}{dt}(k\ast (u-u_{0}))(t),u(t)\right)+M\left(\|u(t)\|^{2}_{X_{0}(\Omega)}\right)\|u\|^{2}_{X_{0}(\Omega)}
    &=(f,u)+\int_{0}^{t}B(t,\tau,u(\tau),u(t))~d\tau.
    \end{aligned}
\end{equation}
\noindent In the equation  \eqref{6},  $k$  does not belong to $W^{1,1}[0,T]$ so  we cannot apply the Lemma \ref{Positivity of fractional derivative} directly in the equation \eqref{6}. To make use of the Lemma \ref{Positivity of fractional derivative}, we approximate $\frac{d}{dt}(k\ast u)$ by its Yosida approximation $\frac{d}{dt}(k_{p}\ast u),p\in \mathbb{N}$ \cite{zacher2009weak} such that 
\begin{equation}
\frac{d}{dt}(k_{p}\ast u) \rightarrow \frac{d}{dt}(k\ast u) \quad \text{in}~~L^{2}(0,T;L^{2}(\Omega)) \quad \text{as}~p \rightarrow \infty,
\end{equation}
where the  kernels $k_{p},p\in \mathbb{N}$ are nonnegative, nonincreasing in $(0,\infty) $ and belong to $W^{1,1}[0,T]$ with the property that $k_{p}\rightarrow k$ in $L^{1}[0,T]$ as $p \rightarrow \infty$. Thus, adding and subtracting $\left(\frac{d}{dt}(k_{p}\ast (u-u_{0}))(t),u(t)\right)$ in the equation \eqref{6}, we get 
\begin{equation}\label{9}
    \begin{aligned}
    \left(\frac{d}{dt}(k_{p}\ast u)(t),u(t)\right)+M\left(\|u(t)\|^{2}_{X_{0}(\Omega)}\right)\|u\|^{2}_{X_{0}(\Omega)}
    &=\left(R_{p}(t),u(t)\right)+\left(\frac{d}{dt}(k_{p}\ast u_{0})(t),u(t)\right)\\
    &+(f,u)+\int_{0}^{t}B(t,\tau,u(\tau),u(t))~d\tau,
    \end{aligned}
\end{equation}
where $R_{p}(t):=\left(\frac{d}{dt}(k_{p}\ast (u-u_{0}))(t)-\frac{d}{dt}(k\ast (u-u_{0}))(t)\right)$. Now $k_{p}, p \in \mathbb{N}$ belong to $W^{1,1}[0,T]$, therefore using Lemma \ref{Positivity of fractional derivative} the equation \eqref{9} is converted into 
    \begin{equation*}\label{10}
    \begin{aligned}
    &\frac{1}{2}\frac{d}{dt}\left(k_{p} \ast \|u\|^{2}\right)(t)+\frac{1}{2}k_{p}(t)\|u\|^{2}
    +\frac{1}{2}\int_{0}^{t}[-k_{p}'(s)]\|u(t)-u(t-s)\|^{2}~ds\\
    &+M\left(\|u(t)\|^{2}_{X_{0}(\Omega)}\right)\|u\|^{2}_{X_{0}(\Omega)}\\
    &=\left(R_{p}(t),u(t)\right)+\left(\frac{d}{dt}(k_{p}\ast u_{0})(t),u(t)\right)+(f,u)+\int_{0}^{t}B(t,\tau,u(\tau),u(t))~d\tau.
    \end{aligned}
\end{equation*}
Positivity of diffusion coefficient \eqref{positivity of diffusion coefficient}, monotonicity of $k_{p}$  and estimate on memory operator \eqref{8-1-24-1} imply  
\begin{equation*}\label{10-1}
    \begin{aligned}
    \frac{1}{2}\frac{d}{dt}\left(k_{p} \ast \|u\|^{2}\right)(t)+\frac{1}{2}k_{p}(t)\|u\|^{2}
    +m_{0}\|u\|^{2}_{X_{0}(\Omega)}&\leq \left(R_{p}(t),u(t)\right)+\left(\frac{d}{dt}(k_{p}\ast u_{0})(t),u(t)\right)\\
    &+(f,u)+B_{0}\int_{0}^{t}\|u(\tau)\|_{X_{0}(\Omega)}\|u(t)\|_{X_{0}(\Omega)}~d\tau.
    \end{aligned}
\end{equation*}
Evaluate $\frac{d}{dt}(k_{p}\ast u_{0})(t)$ and apply Cauchy-Schwarz and Young's inequality to get  
\begin{equation}\label{13}
    \begin{aligned}
    \frac{d}{dt}\left(k_{p} \ast \|u\|^{2}\right)(t)+k_{p}(t)\|u\|^{2}+m_{0}\|u\|^{2}_{X_{0}(\Omega)}&\leq \|R_{p}(t)\|^{2}+k_{p}(t)\|u_{0}\|^{2}+k_{p}(t)\|u\|^{2}+\|f\|^{2}\\
    &+2\|u\|^{2}+\frac{B_{0}^{2}T}{m_{0}}\int_{0}^{t}\|u(\tau)\|^{2}_{X_{0}(\Omega)}~d\tau.
    \end{aligned}
\end{equation}
Convolving  equation \eqref{13} with the kernel $l(t)=\frac{t^{\alpha-1}}{\Gamma(\alpha)}$ and using $l \ast\frac{d}{dt}\left(k_{p}\ast \|u\|^{2}\right)(t)=\frac{d}{dt}\left(k_{p}\ast l \ast \|u\|^{2}\right)(t)$ we have 
\begin{equation}\label{19}
    \begin{aligned}
    \frac{d}{dt}\left(k_{p}\ast l \ast \|u\|^{2}\right)(t)&
    +m_{0}\left(l\ast\|u\|^{2}_{X_{0}(\Omega)}\right)(t)\\
    &\leq \left(l\ast \|R_{p}\|^{2}\right)(t)+ \|u_{0}\|^{2} \left(l\ast k_{p}\right)(t)+\left(l \ast \|f\|^{2}\right)(t)\\
    &+2\left(l \ast \|u\|^{2}\right)(t)+\frac{B_{0}^{2}T}{m_{0}}\left(l \ast \int_{0}^{t}\|u(\tau)\|^{2}_{X_{0}(\Omega)}~d\tau\right)(t).
    \end{aligned}
\end{equation}
Let $p \rightarrow \infty$ in the equation \eqref{19}, then by using the  convergence properties of $k_{p}$ and $R_{p}$ we obtain 
\begin{equation}\label{23}
    \begin{aligned}
    \frac{d}{dt}\left(k\ast l \ast \|u\|^{2}\right)(t)
    +m_{0}\left(l\ast\|u\|^{2}_{X_{0}(\Omega)}\right)(t)
    &\leq  \|u_{0}\|^{2} \left(l\ast k\right)(t)+\left(l \ast \|f\|^{2}\right)(t)+2\left(l \ast \|u\|^{2}\right)(t)\\
    &+\frac{B_{0}^{2}T}{m_{0}}\left(l \ast \int_{0}^{t}\|u(\tau)\|^{2}_{X_{0}(\Omega)}~d\tau\right)(t).
    \end{aligned}
\end{equation}
Using the fact that $(k\ast l)(t)=1$ in the equation \eqref{23}, we conclude
\begin{equation}\label{24}
    \begin{aligned}
    \|u(t)\|^{2} +m_{0}\left(l\ast\|u\|^{2}_{X_{0}(\Omega)}\right)(t)
    &\leq  \|u_{0}\|^{2}+\left(l \ast \|f\|^{2}\right)(t)
    +2\left(l \ast \|u\|^{2}\right)(t)\\
    &+\frac{B_{0}^{2}T}{m_{0}}\left(l \ast \int_{0}^{t}\|u(\tau)\|^{2}_{X_{0}(\Omega)}~d\tau\right)(t).
    \end{aligned}
\end{equation}
The equation \eqref{24} is rewritten as 
\begin{equation}\label{24-1}
    \begin{aligned}
    &\min\{1,m_{0}\}\left(\|u(t)\|^{2} +\left(l\ast\|u\|^{2}_{X_{0}(\Omega)}\right)(t)\right)\\
    &\leq  \|u_{0}\|^{2}+\left(l \ast \|f\|^{2}\right)(t)
    +\max\left\{2,\frac{B_{0}^{2}T}{m_{0}}\right\}\left(l \ast\left(\|u\|^{2}+ \int_{0}^{t}\|u(\tau)\|^{2}_{X_{0}(\Omega)}~d\tau\right)\right)(t).
    \end{aligned}
\end{equation}
Denote $\tilde{u}(t):=\|u(t)\|^{2}+\left(l\ast\|u\|^{2}_{X_{0}(\Omega)}\right)(t)$ and $\tilde{v}(t):=\|u_{0}\|^{2}+\left(l \ast \|f\|^{2}\right)(t)$ which reduce  the equation \eqref{24-1} into
\begin{equation*}\label{25}
    \begin{aligned}
    \tilde{u}(t)
    &\lesssim \tilde{v}(t)+\left(l \ast\left(\|u\|^{2}+ \int_{0}^{t}\|u(\tau)\|^{2}_{X_{0}(\Omega)}~d\tau\right)\right)(t)\\
    &\lesssim \tilde{v}(t)+\left(l \ast\left(\|u\|^{2}+ \int_{0}^{t}(t-\tau)^{1-\alpha}(t-\tau)^{\alpha-1}\|u(\tau)\|^{2}_{X_{0}(\Omega)}~d\tau\right)\right)(t)\\
    &\lesssim \tilde{v}(t)+\left(l \ast\left(\|u\|^{2}+ \int_{0}^{t}(t)^{1-\alpha}(t-\tau)^{\alpha-1}\|u(\tau)\|^{2}_{X_{0}(\Omega)}~d\tau\right)\right)(t)\\
    &\lesssim \tilde{v}(t)+\int_{0}^{t}(t-s)^{\alpha-1}\tilde{u}(s)~ds.
    \end{aligned}
\end{equation*}
$\tilde{v}(t)$ is nondecreasing on $(0,T)$ so we can apply Lemma \ref{Gronwall inequality} and Lemma \ref{8-1-24-2} to conclude 
\begin{equation}\label{26}
    \tilde{u}(t)\lesssim \tilde{v}(t)E_{\alpha}\left[\Gamma(\alpha)t^{\alpha}\right]\lesssim \tilde{v}(t)~~~~\forall~t\in [0,T].
\end{equation}
Hence the result \eqref{20-4-1} follows. 
\end{proof}

\begin{lem}\textbf{(A priori bound 2)}\label{21-4-8}
Under the assumptions (H1)-(H3), the solution $u$ of the equation \eqref{problem statement} satisfies the following a priori bounds
\begin{equation}\label{20-4-6}
    \|u\|_{L^{\infty}(0,T;X_{0}(\Omega))}^{2}+\|(-\Delta)^{s}u\|^{2}_{L^{2}_{\alpha}(0,T;L^{2}(\Omega))}\lesssim \|u_{0}\|^{2}_{X_{0}(\Omega)}+\|f\|^{2}_{L^{2}_{\alpha}(0,T;L^{2}(\Omega))},
    \end{equation}
    and
    \begin{equation}\label{32-5y}
    \begin{aligned}
  \|(-\Delta)^{s}u\|_{L^{2}(0,T;L^{2}(\Omega))}^{2} &\lesssim \|u_{0}\|^{2}_{X_{0}(\Omega)}+\|f\|^{2}_{L^{2}_{\alpha}(0,T;L^{2}(\Omega))}.
    \end{aligned}
\end{equation}
\end{lem}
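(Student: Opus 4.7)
The strategy parallels the proof of Lemma \ref{21-4-9}, but now I would test the weak formulation \eqref{weak form 1} with $v=(-\Delta)^{s}u$ (justified at the Galerkin level, where basis functions are eigenfunctions of $(-\Delta)^{s}$, so $(-\Delta)^{s}u_{n}$ lies in the Galerkin subspace). Using self-adjointness $(u,(-\Delta)^{s}u)_{X_{0}(\Omega)}=\|(-\Delta)^{s}u\|^{2}$ and $(\partial^{\alpha}_{t}u,(-\Delta)^{s}u)=(\partial^{\alpha}_{t}u,u)_{X_{0}(\Omega)}$, the tested equation reads
\begin{equation*}
(\partial^{\alpha}_{t}u,u)_{X_{0}(\Omega)}+M(\|u(t)\|^{2}_{X_{0}(\Omega)})\|(-\Delta)^{s}u\|^{2}=(f,(-\Delta)^{s}u)+\int_{0}^{t}B(t,\tau,u(\tau),(-\Delta)^{s}u)\,d\tau.
\end{equation*}

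Next, I would repeat the Yosida-approximation step from the previous lemma, replacing $k$ by $k_{p}\in W^{1,1}[0,T]$, add and subtract $\bigl(\tfrac{d}{dt}(k_{p}\ast (u-u_{0})),u\bigr)_{X_{0}(\Omega)}$, and invoke Lemma \ref{Positivity of fractional derivative} this time with Hilbert space $H=X_{0}(\Omega)$ and $v=u$. This produces on the left the nonnegative terms $\tfrac{1}{2}\tfrac{d}{dt}(k_{p}\ast\|u\|^{2}_{X_{0}(\Omega)})(t)+\tfrac{1}{2}k_{p}(t)\|u\|^{2}_{X_{0}(\Omega)}+m_{0}\|(-\Delta)^{s}u\|^{2}$. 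The right-hand side is bounded by Cauchy--Schwarz and Young's inequality: $(f,(-\Delta)^{s}u)\le\tfrac{m_{0}}{4}\|(-\Delta)^{s}u\|^{2}+\tfrac{1}{m_{0}}\|f\|^{2}$, while the memory term uses the key estimate \eqref{28-3-24-4} to yield
\begin{equation*}
\Bigl|\int_{0}^{t}B(t,\tau,u(\tau),(-\Delta)^{s}u)\,d\tau\Bigr|\le \|(-\Delta)^{s}u(t)\|\int_{0}^{t}\|(-\Delta)^{s}u(\tau)\|\,d\tau\le \tfrac{m_{0}}{4}\|(-\Delta)^{s}u\|^{2}+\tfrac{T}{m_{0}}\int_{0}^{t}\|(-\Delta)^{s}u(\tau)\|^{2}d\tau.
\end{equation*}
The term $(\tfrac{d}{dt}(k_{p}\ast u_{0}),u)_{X_{0}(\Omega)}=k_{p}(t)(u_{0},u)_{X_{0}(\Omega)}$ is split by Young's inequality into $k_{p}(t)\|u_{0}\|^{2}_{X_{0}(\Omega)}$ plus a term absorbed into $\tfrac{1}{2}k_{p}(t)\|u\|^{2}_{X_{0}(\Omega)}$. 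The two $\tfrac{m_{0}}{4}\|(-\Delta)^{s}u\|^{2}$ pieces are absorbed into the $m_{0}\|(-\Delta)^{s}u\|^{2}$ term on the left.

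Now I would convolve the resulting inequality with $l(t)=t^{\alpha-1}/\Gamma(\alpha)$, let $p\to\infty$ using the convergence of $k_{p}$ and of the error $R_{p}$ as in the preceding lemma, and use $(k\ast l)(t)=1$ to obtain
\begin{equation*}
\|u(t)\|^{2}_{X_{0}(\Omega)}+m_{0}\bigl(l\ast\|(-\Delta)^{s}u\|^{2}\bigr)(t)\lesssim \|u_{0}\|^{2}_{X_{0}(\Omega)}+\bigl(l\ast\|f\|^{2}\bigr)(t)+\int_{0}^{t}(t-s)^{\alpha-1}\bigl[\|u(s)\|^{2}_{X_{0}(\Omega)}+(l\ast\|(-\Delta)^{s}u\|^{2})(s)\bigr]ds.
\end{equation*}
Defining $\tilde{u}$ as the bracketed sum on the left and $\tilde{v}$ from the first two terms on the right, the fractional Gronwall Lemma \ref{Gronwall inequality} together with Lemma \ref{8-1-24-2} (boundedness of $E_{\alpha}$) yields \eqref{20-4-6}. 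Finally, for \eqref{32-5y} I would use the elementary bound $\int_{0}^{T}\|(-\Delta)^{s}u(\tau)\|^{2}d\tau\le T^{1-\alpha}\sup_{t\in(0,T)}\int_{0}^{t}(t-\tau)^{\alpha-1}\|(-\Delta)^{s}u(\tau)\|^{2}d\tau$, which converts the $L^{2}_{\alpha}$ bound \eqref{20-4-6} into an $L^{2}$ bound.

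The main obstacle is the rigorous justification of testing with $v=(-\Delta)^{s}u$ and of the self-adjointness identity linking the $L^{2}$ and $X_{0}(\Omega)$ pairings; this is what forces the proof to proceed through the Galerkin approximation on a basis of fractional Laplacian eigenfunctions, so that $(-\Delta)^{s}u_{n}$ remains in the finite-dimensional subspace and all integrations by parts are legitimate. A secondary technical issue is balancing the constants so that both the Young absorption of $\|(-\Delta)^{s}u\|^{2}$ into the $m_{0}$ term and the subsequent Gronwall step (where the remainder forms a time-fractional convolution) close consistently; the rest of the computation is a careful but routine repetition of the Yosida--convolution--Gronwall loop established in Lemma \ref{21-4-9}.
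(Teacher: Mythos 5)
Your proposal follows essentially the same route as the paper: test with $v=(-\Delta)^{s}u$ at the Galerkin level, identify $(\partial^{\alpha}_{t}u,(-\Delta)^{s}u)=(\partial^{\alpha}_{t}u,u)_{X_{0}(\Omega)}$, run the Yosida--convolution--Gronwall loop of Lemma \ref{21-4-9} with $H=X_{0}(\Omega)$ in Lemma \ref{Positivity of fractional derivative}, and absorb the source and memory contributions into $m_{0}\|(-\Delta)^{s}u\|^{2}$; this yields \eqref{20-4-6} exactly as in the paper. The only deviation is your derivation of \eqref{32-5y}: the paper re-integrates the differential inequality over $[0,t]$ and invokes a classical Gronwall argument, whereas you obtain it in one line from \eqref{20-4-6} via $(t-\tau)^{\alpha-1}\geq T^{\alpha-1}$; this shortcut is valid and in fact cleaner than the paper's argument.
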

\begin{proof}
Substitute $v=(-\Delta)^{s}u(t)$ in \eqref{weak form 1}, we obtain
\begin{equation}\label{30}
    \begin{aligned}
    \left(\partial^{\alpha}_{t}u,(-\Delta)^{s}u\right)+M\left(\|u(t)\|^{2}_{X_{0}(\Omega)}\right)\|(-\Delta)^{s}u\|^{2}
    &=(f,(-\Delta)^{s}u)\\
    &+\int_{0}^{t}\left(b(x,t,\tau)u(\tau),(-\Delta)^{s}u(t)\right)~d\tau.
    \end{aligned}
\end{equation}
Again using the Yosida approximation, we  rewrite equation \eqref{30} as 
\begin{equation}\label{32-1}
    \begin{aligned}
    \left(\frac{d}{dt}(k_{p}\ast u)(t),u(t)\right)_{X_{0}(\Omega)}&+M\left(\|u(t)\|^{2}_{X_{0}(\Omega)}\right)\|(-\Delta)^{s}u\|^{2}\\
    &=\left(R_{p}(t),(-\Delta)^{s}u(t)\right)+\left(\frac{d}{dt}(k_{p}\ast u_{0})(t),u(t)\right)_{X_{0}(\Omega)}\\
    &+(f,(-\Delta)^{s}u)+\int_{0}^{t}\left(b(x,t,\tau)u(\tau),(-\Delta)^{s}u(t)\right)~d\tau,
    \end{aligned}
\end{equation}
where $R_{p}(t):=\left(\frac{d}{dt}(k_{p}\ast (u-u_{0}))(t)-\frac{d}{dt}(k\ast (u-u_{0}))(t)\right)$. Positivity of the diffusion coefficient \eqref{positivity of diffusion coefficient}, estimate on memory operator \eqref{8-1-24-1}, Cauchy-Schwarz inequality and Young's inequality yield
\begin{equation}\label{32}
    \begin{aligned}
    \left(\frac{d}{dt}(k_{p}\ast u)(t),u(t)\right)_{X_{0}(\Omega)}&+\frac{m_{0}}{2}\|(-\Delta)^{s}u\|^{2}\\
    &\leq \frac{3}{2m_{0}}\|R_{p}(t)\|^{2} + \frac{1}{2}k_{p}(t)\|u_{0}\|^{2}_{X_{0}(\Omega)}+\frac{1}{2}k_{p}(t)\|u\|^{2}_{X_{0}(\Omega)}+\frac{3}{2m_{0}}\|f\|^{2}\\
    &+\frac{3}{m_{0}}T|\beta|^{2}\int_{0}^{t}\|(-\Delta)^{s}u(\tau)\|^{2}~d\tau+\frac{3}{m_{0}}T|B_{0}|^{2}\int_{0}^{t}\|u(\tau)\|^{2}~d\tau.
    \end{aligned}
\end{equation}
Apply Lemma \ref{Positivity of fractional derivative} to get 
\begin{equation}\label{32-2}
    \begin{aligned}
  \frac{1}{2}\frac{d}{dt}\left(k_{p} \ast \|u\|_{X_{0}(\Omega)}^{2}\right)(t)&+\frac{1}{2}k_{p}(t)\|u\|_{X_{0}(\Omega)}^{2}+\frac{m_{0}}{2}\|(-\Delta)^{s}u\|^{2}\\
    &\leq \frac{3}{2m_{0}}\|R_{p}(t)\|^{2} + \frac{1}{2}k_{p}(t)\|u_{0}\|^{2}_{X_{0}(\Omega)}+\frac{1}{2}k_{p}(t)\|u\|^{2}_{X_{0}(\Omega)}+\frac{3}{2m_{0}}\|f\|^{2}\\
    &+\frac{3}{m_{0}}T|\beta|^{2}\int_{0}^{t}\|(-\Delta)^{s}u(\tau)\|^{2}~d\tau+\frac{3}{m_{0}}T|B_{0}|^{2}\int_{0}^{t}\|u(\tau)\|^{2}~d\tau.
    \end{aligned}
\end{equation}
Proceeding further along the same lines of the proof of estimate \eqref{20-4-1}  to conclude the desired estimate \eqref{20-4-6}.
\par Moreover, integrating \eqref{32-2} on [0,t] we obtain 
\begin{equation}\label{32-3}
    \begin{aligned}
 & \frac{1}{2}\left(k_{p} \ast \|u\|_{X_{0}(\Omega)}^{2}\right)(t)+\frac{m_{0}}{2}\int_{0}^{t}\|(-\Delta)^{s}u(\tau)\|^{2}~d\tau\\
    &\leq \frac{3}{2m_{0}}\int_{0}^{t}\|R_{p}(\tau)\|^{2}~d\tau + \frac{1}{2}\int_{0}^{t}k_{p}(\tau)\|u_{0}\|^{2}_{X_{0}(\Omega)}~d\tau+\frac{3}{2m_{0}}\int_{0}^{t}\|f(\tau)\|^{2}~d\tau\\
    &+\frac{3}{m_{0}}T|\beta|^{2}\int_{0}^{t}\int_{0}^{\tau}\|(-\Delta)^{s}u(\omega)\|^{2}~d\omega d\tau+\frac{3}{m_{0}}T|B_{0}|^{2}\int_{0}^{t}\int_{0}^{\tau}\|u(\omega)\|^{2}~d\omega d\tau.
    \end{aligned}
\end{equation}
Drop the first term from LHS of \eqref{32-3} and letting $p$ goes to infinity we have
\begin{equation}\label{32-4}
    \begin{aligned}
  \int_{0}^{t}\|(-\Delta)^{s}u(\tau)\|^{2}~d\tau &\lesssim \|u_{0}\|^{2}_{X_{0}(\Omega)}+\int_{0}^{t}\|f(\tau)\|^{2}~d\tau+\int_{0}^{t}\int_{0}^{\tau}\|(-\Delta)^{s}u(\omega)\|^{2}~d\omega d\tau\\
    &+\int_{0}^{t}\int_{0}^{\tau}\|u(\omega)\|^{2}~d\omega d\tau.
    \end{aligned}
\end{equation}
Apply estimate \eqref{20-4-1} and classical discrete Gronwall's inequality  to conclude 
\begin{equation}\label{32-5}
    \begin{aligned}
  \|(-\Delta)^{s}u\|_{L^{2}(0,T;L^{2}(\Omega))}^{2} &\lesssim \|u_{0}\|^{2}_{X_{0}(\Omega)}+\|f\|^{2}_{L^{2}_{\alpha}(0,T;L^{2}(\Omega))}.
    \end{aligned}
\end{equation}
\end{proof}

\begin{lem}\textbf{(A priori bound 3)}\label{21-4-7}
Under the assumptions of Lemma  \ref{21-4-8}, the solution $u$ of the equation \eqref{problem statement} satisfies the following a priori bound
\begin{equation}\label{20-4-7}
   \|\partial^{\alpha}_{t}u\|^{2}_{L^{2}(0,T;L^{2}(\Omega))}
    \lesssim \|u_{0}\|^{2}_{X_{0}(\Omega)}+\|f\|^{2}_{L^{2}_{\alpha}(0,T;L^{2}(\Omega))}.
    \end{equation}
\end{lem}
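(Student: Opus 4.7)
The plan is to estimate $\partial^\alpha_t u$ directly by reading it off from the strong form of \eqref{problem statement}, and then to invoke Lemmas \ref{21-4-9} and \ref{21-4-8} to bound the resulting right-hand side. Rewriting \eqref{problem statement} as
\begin{equation*}
\partial^\alpha_t u = f + \int_0^t b(x,t,\tau) u(\tau) \, d\tau - M\bigl(\|u(t)\|^2_{X_0(\Omega)}\bigr)(-\Delta)^s u,
\end{equation*}
taking $L^2(\Omega)$ norms, squaring via $(a+b+c)^2 \leq 3(a^2+b^2+c^2)$, and integrating in $t$ over $(0,T)$ should reduce the problem to controlling $\|f\|^2_{L^2(0,T;L^2(\Omega))}$ and $\|(-\Delta)^s u\|^2_{L^2(0,T;L^2(\Omega))}$, both of which are already available.

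For the three terms I would argue as follows. The source term integrates to $\|f\|^2_{L^2(0,T;L^2(\Omega))}$; since $\alpha<1$ implies $(T-\tau)^{\alpha-1} \geq T^{\alpha-1}$ on $[0,T]$, this is $\leq T^{1-\alpha}\|f\|^2_{L^2_\alpha(0,T;L^2(\Omega))}$. For the memory integral, Minkowski's integral inequality together with the hypothesis \eqref{28-3-24-4} and then Cauchy-Schwarz in time give
\begin{equation*}
\left\| \int_0^t b(x,t,\tau) u(\tau) \, d\tau \right\|^2 \leq T \int_0^T \|(-\Delta)^s u(\tau)\|^2 \, d\tau,
\end{equation*}
so its time integral is $\lesssim \|(-\Delta)^s u\|^2_{L^2(0,T;L^2(\Omega))}$. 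For the Kirchhoff term, the a priori bound \eqref{20-4-6} gives $\|u\|_{L^\infty(0,T;X_0(\Omega))} \leq \tilde C$, so $\|u(t)\|^2_{X_0(\Omega)}$ lies in the fixed compact interval $[0,\tilde C^2]$; the Lipschitz continuity of $M$ then forces $|M(\|u(t)\|^2_{X_0(\Omega)})| \leq M_\ast$ uniformly in $t$, and after integration this contribution is also $\lesssim \|(-\Delta)^s u\|^2_{L^2(0,T;L^2(\Omega))}$. Invoking \eqref{32-5y} to absorb this last quantity into the data yields the claim.

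The main obstacle is not analytical but structural: the Kirchhoff coefficient must be uniformly bounded along the solution, which is guaranteed only because \eqref{20-4-6} produces the full $L^\infty(0,T;X_0(\Omega))$ bound and because $M$ is Lipschitz on bounded sets. Without that stronger a priori estimate established first, one could not avoid a circular argument involving the nonlinear coefficient. Once this is in place, the rest is a direct application of the triangle and Cauchy-Schwarz inequalities, with no delicate cancellation needed.
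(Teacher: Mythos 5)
Your proposal is correct and follows essentially the same route as the paper: isolate $\partial^\alpha_t u$ from the equation, bound the $L^2(\Omega)$ norm of the right-hand side using the uniform bound on $M(\|u(t)\|^2_{X_0(\Omega)})$ coming from the $L^\infty(0,T;X_0(\Omega))$ estimate \eqref{20-4-6}, and then integrate in time invoking \eqref{20-4-1} and \eqref{32-5y}. The only cosmetic difference is that the paper phrases the pointwise-in-time step through the dual characterization $\|\partial^\alpha_t u\| = \sup_{0\neq v\in L^2(\Omega)} |(\partial^\alpha_t u, v)|/\|v\|$, which is equivalent to your direct triangle-inequality estimate.
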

\begin{proof}
Let $v \in L^{2}(\Omega)$ be an arbitrary element. Then multiply  \eqref{problem statement}  by  $v$ and integrate  over $\Omega$, we get 
\begin{equation*}\label{34}
    \begin{aligned}
    \left(\partial^{\alpha}_{t}u,v\right)=-M\left(\|u(t)\|^{2}_{X_{0}(\Omega)}\right)\left((-\Delta)^{s}u,v\right)+(f,v)+\int_{0}^{t}(b(x,t,\tau)u(\tau),v)~d\tau.
    \end{aligned}
\end{equation*}
Using estimate \eqref{20-4-6}, continuity of $M$, and  Cauchy-Schwarz inequality we have
\begin{equation}\label{34-1}
    \begin{aligned}
    \|\partial^{\alpha}_{t}u\|=\sup_{0\neq v \in L^{2}(\Omega)}\frac{|\left(\partial^{\alpha}_{t}u,v\right)|}{\|v\|}\lesssim \left(\|(-\Delta)^{s}u\|+\|f\|+\int_{0}^{t}\left(\|(-\Delta)^{s}u(\tau)\|+\|u(\tau)\|\right)~d\tau\right).
    \end{aligned}
\end{equation}
Squaring and integrating \eqref{34-1} on (0,T) and in the view of estimates \eqref{20-4-1}, \eqref{32-5} we conclude
\begin{equation*}\label{36}
    \begin{aligned}
    \|\partial^{\alpha}_{t}u\|^{2}_{L^{2}(0,T;L^{2}(\Omega))} \lesssim \|u_{0}\|^{2}_{X_{0}(\Omega)}+\|f\|^{2}_{L^{2}_{\alpha}(0,T;L^{2}(\Omega))}.
    \end{aligned}
\end{equation*}
\end{proof}

 \begin{thm}\label{30-3-23-1}
 Suppose that  (H1)-(H3) hold. Then there exists a unique weak solution to the problem \eqref{problem statement}. 
 \end{thm}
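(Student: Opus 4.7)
The plan is to execute the Galerkin scheme flagged in the opening of this section. I would fix an orthonormal (in $L^2(\Omega)$) basis $\{w_j\}_{j\geq 1}$ of $X_0(\Omega)$ consisting of eigenfunctions of $(-\Delta)^s$ with homogeneous Dirichlet condition on $\Omega^c$, set $V_n=\operatorname{span}\{w_1,\ldots,w_n\}$, and seek $u_n(t)=\sum_{j=1}^{n}g_{j,n}(t)w_j$ satisfying \eqref{weak form 1} tested against each $w_j$. This yields a coupled system of Caputo fractional ODEs in $(g_{1,n},\ldots,g_{n,n})$ with continuous right-hand side, to which Theorem \ref{13-5-1} furnishes a local-in-time solution. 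The arguments producing Lemmas \ref{21-4-9}, \ref{21-4-8}, \ref{21-4-7} go through verbatim for $u_n$ in place of $u$ (the test $v=u_n(t)$ and $v=(-\Delta)^s u_n(t)$ both lie in $V_n$), and the resulting uniform bounds let me continue each $u_n$ to the whole of $[0,T]$ with $n$-independent estimates.

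With these uniform estimates, after extracting a subsequence I would obtain $u_n\rightharpoonup u$ weakly-$\ast$ in $L^{\infty}(0,T;X_0(\Omega))$, $(-\Delta)^s u_n\rightharpoonup (-\Delta)^s u$ weakly in $L^2(0,T;L^2(\Omega))$, and $\partial_t^{\alpha}u_n\rightharpoonup \partial_t^{\alpha}u$ weakly in $L^2(0,T;L^2(\Omega))$. Applying Lemma \ref{13-5-3} with $X=X_0(\Omega)$ and $Y=Z=L^2(\Omega)$ — whose hypotheses are supplied by Lemmas \ref{21-4-9} and \ref{21-4-7} — promotes these to strong convergence $u_n\to u$ in $L^2(0,T;L^2(\Omega))$.

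The principal obstacle is that strong $L^2(0,T;L^2(\Omega))$ convergence alone is insufficient to pass to the limit in the nonlinear Kirchhoff coefficient $M(\|u_n(t)\|^2_{X_0(\Omega)})$ and in the $X_0$-inner-product contribution of the memory operator. My remedy is to apply Lemma \ref{29-3-23-5} to the scalar sequence $f_n(t):=\|u_n(t)\|^2_{X_0(\Omega)}$: the uniform bound in $L^{\infty}(0,T)$ supplied by Lemma \ref{21-4-8} gives both the $L^1$ bound and equi-integrability on subsets of small Lebesgue measure, hence $\{f_n\}$ is relatively compact in $L^1[0,T]$. A further subsequence therefore converges a.e.\ on $[0,T]$, and weak lower semicontinuity of $\|\cdot\|_{X_0(\Omega)}$ combined with uniqueness of the limit forces the a.e.\ limit to equal $\|u(t)\|^2_{X_0(\Omega)}$. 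Lipschitz continuity of $M$ and dominated convergence then yield $M(\|u_n(\cdot)\|^2_{X_0(\Omega)})\to M(\|u(\cdot)\|^2_{X_0(\Omega)})$ strongly in $L^p(0,T)$ for every $p<\infty$. Coupling this with the weak convergence of $(-\Delta)^s u_n$ allows passage to the limit in the Kirchhoff term; the memory term is handled analogously via \eqref{8-1-24-1} and Fubini. Testing against an arbitrary $v\in V_m$ for $n\geq m$ and invoking density of $\bigcup_m V_m$ in $X_0(\Omega)$ yields \eqref{weak form 1}, while the initial condition is inherited from $u_n(0)$ via the regularity $\partial_t^{\alpha}u\in L^2(0,T;L^2(\Omega))$.

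For uniqueness, let $u_1,u_2$ be two weak solutions and set $w=u_1-u_2$. Testing the difference of the two equations against $w$, regularizing the Caputo derivative by the Yosida approximation $k_p$ as in Lemma \ref{21-4-9} so that Lemma \ref{Positivity of fractional derivative} applies, and writing the Kirchhoff term as
\begin{equation*}
M(\|u_1\|^2_{X_0})(u_1,w)_{X_0}-M(\|u_2\|^2_{X_0})(u_2,w)_{X_0}=M(\|u_1\|^2_{X_0})\|w\|^2_{X_0}+\bigl(M(\|u_1\|^2_{X_0})-M(\|u_2\|^2_{X_0})\bigr)(u_2,w)_{X_0},
\end{equation*}
I would use $M(\|u_1\|^2_{X_0})\geq m_0$ together with the Lipschitz estimate $|M(\|u_1\|^2_{X_0})-M(\|u_2\|^2_{X_0})|\leq L_M(\|u_1\|_{X_0}+\|u_2\|_{X_0})\|w\|_{X_0}$ and the a priori bound $\|u_i\|_{X_0(\Omega)}\leq K$; the smallness condition $m_0-4L_M K^2>0$ from (H2) then absorbs the second term into the coercive $m_0\|w\|^2_{X_0}$. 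Controlling the memory contribution via \eqref{8-1-24-1}, convolving with $l(t)=t^{\alpha-1}/\Gamma(\alpha)$, and finishing with the fractional Gr\"onwall inequality (Lemma \ref{Gronwall inequality}) together with the Mittag-Leffler bound (Lemma \ref{8-1-24-2}) forces $w\equiv 0$.
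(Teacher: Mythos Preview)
Your overall Galerkin strategy, the a priori bounds, the use of Lemma \ref{13-5-3} for strong $L^2(0,T;L^2(\Omega))$ compactness, and the uniqueness argument all match the paper and are correct. The genuine gap is in the identification of the limit of the Kirchhoff coefficient. You obtain, via Lemma \ref{29-3-23-5}, that $f_n(t)=\|u_n(t)\|^2_{X_0(\Omega)}$ converges (along a subsequence) a.e.\ to some $g(t)$, and then invoke ``weak lower semicontinuity of $\|\cdot\|_{X_0(\Omega)}$ combined with uniqueness of the limit'' to force $g(t)=\|u(t)\|^2_{X_0(\Omega)}$. But weak lower semicontinuity delivers only $\|u(t)\|^2_{X_0(\Omega)}\leq \liminf_n \|u_n(t)\|^2_{X_0(\Omega)}=g(t)$; nothing you have written rules out strict inequality. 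There is no ``other'' identification of $g$ available from the weak convergences alone that would pin it down as $\|u(t)\|^2_{X_0(\Omega)}$, so the step as stated does not go through, and consequently you cannot pass to the limit in $M(\|u_n(t)\|^2_{X_0(\Omega)})$.

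The paper closes exactly this gap by an equation-based energy argument that upgrades to \emph{strong} convergence $u_m\to u$ in $X_0(\Omega)$ for a.e.\ $t$. One first records the abstract weak-$\ast$ limit $M(\|u_m\|^2_{X_0})u_m\overset{\ast}{\rightharpoonup}\eta$ and the corresponding limit equation with $\eta$ in place of the Kirchhoff term. Then one expands $M(\|u_m\|^2_{X_0})\|u_m-u\|^2_{X_0(\Omega)}$, substitutes the Galerkin equation tested against $v_m=u_m$, and collects the resulting terms into a quantity $\mathcal{Z}_m$ which, by the already-established weak convergences and the limit equation tested against $v=u$, tends to zero a.e. The leftover memory piece is controlled by $\|u_m(t)-u(t)\|$ times uniformly bounded quantities and hence also vanishes via the strong $L^2(0,T;L^2(\Omega))$ convergence. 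This yields $\|u_m-u\|_{X_0(\Omega)}\to 0$ a.e., from which $M(\|u_m\|^2_{X_0})\to M(\|u\|^2_{X_0})$ and the identification $\eta=M(\|u\|^2_{X_0})u$ follow. You need this (or an equivalent device producing the reverse inequality) to complete the existence proof.
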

 \begin{proof} \textbf{(Existence)}
 Let $\{\lambda_{i},\phi_{i}\}_{i\in \mathbb{N}}$ be the eigenpairs of the problem \cite[Proposition 9]{servadei2013variational}
 \begin{equation}\label{20-1-24-1}
 \begin{aligned}
 (-\Delta)^{s}w&=\lambda w~~\text{in}~\Omega,\\
 w&=0~~\text{in}~\Omega^{c}.
 \end{aligned}
 \end{equation}
 Then eignevalues  $\{\lambda_{i}\}_{i\in \mathbb{N}}$ satisfy $0< \lambda_{1}\leq \lambda_{2}\leq \lambda_{3}\leq \dots$ and $\lambda_{k}\rightarrow \infty $ as $k \rightarrow \infty$. The set of eigenfunctions $\{\phi_{i}\}_{i\in \mathbb{N}}$  is an orthonormal basis of $L^{2}(\Omega)$ and orthogonal basis of $X_{0}(\Omega)$. For any fixed positive integer $m$, consider  a finite dimensional subspace $V_{m}$ of $X_{0}(\Omega)$ such that $V_{m}$ is spanned by $\{\phi_{i}\}_{i=1}^{m}$. We consider the weak formulation \eqref{weak form 1} onto this finite dimensional subspace $V_{m}$ as to  find $u_{m}(t)\in V_{m}$ which satisfies the   following equations for all $v_{m} \in V_{m}$ and $a.e.~t\in (0,T]$ 
\begin{equation}\label{N24a}
\begin{aligned}
\left(\partial^{\alpha}_{t}u_{m},v_{m}\right)+M\left(\|u_{m}(t)\|^{2}_{X_{0}(\Omega)}\right)(u_{m},v_{m})_{X_{0}(\Omega)}&=(f,v_{m})+\int_{0}^{t}B(t,\tau,u_{m}(\tau),v_{m})~d\tau\\
u_{m}(0)&=\sum_{i=1}^{m}(u_{0},\phi_{i})\phi_{i}.
\end{aligned}
\end{equation}
Put the identification  $u_{m}(x,t)=\sum_{i=1}^{m}\alpha_{i}(t)\phi_{i}(x)$ in equation \eqref{N24a}. Then using the properties of eigenfunctions $\{\phi_{i}\}_{i\in \mathbb{N}}$, the problem \eqref{N24a} is converted into a coupled system of nonlinear fractional differential equations. By  Lemma \ref{13-5-1}, there exists a unique continuous solution $u_{m}$ of \eqref{N24a} in $[0, T_{m}]~(0<T_{m}<T)$ such that $(k\ast(u_{m}-u_{m}(0)))$ has vanishing trace at $t=0$. These solutions are made globally on $[0,T]$  using the a priori bounds \eqref{20-4-1}, \eqref{20-4-6}, and \eqref{20-4-7} \cite[Page 1311]{fu2022global}. 
\par Taking $X=X_{0}(\Omega)$, $Y=L^{2}(\Omega)$, and $Z=L^{2}(\Omega)$ in Lemma \ref{13-5-3} and using a priori bounds \eqref{20-4-1} and \eqref{20-4-7}, we conclude 
\begin{equation}\label{29-3-23-1-2}
u_{m}\rightarrow u \quad \text{in}~L^{2}(0,T;L^{2}(\Omega))\quad \text{as}~m\rightarrow \infty.
\end{equation}
Thus $u_{m}\rightarrow u ~ a.e. ~\text{in}~\Omega \times(0,T).$ Now, utilizing a priori bounds \eqref{20-4-6}, \eqref{32-5}, \eqref{20-4-7}, and continuity of $M$, we have
\begin{align}
u_{m} & \overset{\ast}{\rightharpoonup} u\quad \text{in}~L^{\infty}(0,T;X_{0}(\Omega))\quad \text{as}~m\rightarrow \infty\label{29-3-23-2}\\
u_{m}~ & {\rightharpoonup}~ u\quad \text{in}~L^{2}(0,T;X_{0}(\Omega))\quad \text{as}~m\rightarrow \infty\label{29-3-23-2-1}\\
(-\Delta)^{s}u_{m}& \rightharpoonup(-\Delta)^{s}u \quad \text{in}~L^{2}(0,T;L^{2}(\Omega))\quad \text{as}~m\rightarrow \infty\label{9-1-24-1}\\
\partial^{\alpha}_{t}u_{m} & \rightharpoonup \partial^{\alpha}_{t}u\quad \text{in}~L^{2}(0,T;L^{2}(\Omega))\quad \text{as}~m\rightarrow \infty\label{29-3-23-1}\\
M\left(\|u_{m}(t)\|^{2}_{X_{0}(\Omega)}\right)u_{m} &\overset{\ast}{\rightharpoonup} \eta \quad \text{in}~L^{\infty}(0,T;X_{0}(\Omega))\quad \text{as}~m\rightarrow \infty.\label{29-3-23-3}
\end{align}
Let $v\in L^{2}(0,T;X_{0}(\Omega))$ be an arbitrary element. Then there exists a sequence $v_{m}\in V_{m}$ such that 
\begin{equation}\label{29-3-23-4}
v_{m}\rightarrow v\quad \text{in}~L^{2}(0,T;X_{0}(\Omega))\quad \text{as}~m\rightarrow \infty.
\end{equation}
Let $m \rightarrow  \infty $ in \eqref{N24a} and apply \eqref{29-3-23-2-1}-\eqref{29-3-23-4} to obtain
\begin{equation}\label{N24a1}
\begin{aligned}
\left(\partial^{\alpha}_{t}u,v\right)+(\eta,v)_{X_{0}(\Omega)}=(f,v)+\int_{0}^{t}B(t,\tau,u(\tau),v)~d\tau.
\end{aligned}
\end{equation}
To show the existence of the weak solution, we need to prove $M\left(\|u(t)\|^{2}_{X_{0}(\Omega)}\right)u=\eta$. It is enough to prove that $u_{m}\rightarrow u ~\text{in} ~X_{0}(\Omega) ~a.e. ~t \in (0,T)$. Using \eqref{20-4-6},  continuity of $M$, and Lemma \ref{29-3-23-5},  we deduce that $\left\{M\left(\|u_{m}(t)\|^{2}_{X_{0}(\Omega)}\right)\right\}$ is relatively compact in $L^{1}(0,T)$. Therefore, up to a subsequence we have 
\begin{equation}\label{29-3-23-6}
M\left(\|u_{m}(t)\|^{2}_{X_{0}(\Omega)}\right) \rightarrow \zeta(t)\quad \text{in}~L^{1}(0,T)~~\text{as}~m\rightarrow \infty.
\end{equation}
From \eqref{29-3-23-2} and  \eqref{29-3-23-6}, we conclude 
\begin{equation}
\int_{0}^{T}M\left(\|u_{m}(t)\|^{2}_{X_{0}(\Omega)}\right)\left(u,u_{m}-u\right)_{X_{0}(\Omega)}~dt\rightarrow 0~~\text{as}~m\rightarrow \infty.
\end{equation}
Hence 
\begin{equation}\label{29-3-23-7-1}
M\left(\|u_{m}(t)\|^{2}_{X_{0}(\Omega)}\right)\left(u,u_{m}-u\right)_{X_{0}(\Omega)}\rightarrow 0~~a.e.~t\in (0,T)~~\text{as}~m\rightarrow \infty.
\end{equation}
Consider 
\begin{equation}\label{29-3-23-7}
\begin{aligned}
&M\left(\|u_{m}(t)\|^{2}_{X_{0}(\Omega)}\right)\|u_{m}-u\|^{2}_{X_{0}(\Omega)}\\
&=M\left(\|u_{m}(t)\|^{2}_{X_{0}(\Omega)}\right)\|u_{m}-u\|^{2}_{X_{0}(\Omega)}-\int_{0}^{t}B(t,\tau,u_{m}(\tau),u_{m}-u)~d\tau\\
&+\int_{0}^{t}B(t,\tau,u_{m}(\tau),u_{m}-u)~d\tau.
\end{aligned}
\end{equation}
This can be rewritten as 
\begin{equation}\label{29-3-23-8}
\begin{aligned}
&M\left(\|u_{m}(t)\|^{2}_{X_{0}(\Omega)}\right)\|u_{m}-u\|^{2}_{X_{0}(\Omega)}\\
&=-M\left(\|u_{m}(t)\|^{2}_{X_{0}(\Omega)}\right)\left(u_{m},u\right)_{X_{0}(\Omega)}-M\left(\|u_{m}(t)\|^{2}_{X_{0}(\Omega)}\right)\left(u,u_{m}-u\right)_{X_{0}(\Omega)}\\
&+M\left(\|u_{m}(t)\|^{2}_{X_{0}(\Omega)}\right)\left(u_{m},u_{m}\right)_{X_{0}(\Omega)}-\int_{0}^{t}B(t,\tau,u_{m}(\tau),u_{m}-u)~d\tau\\
&+\int_{0}^{t}B(t,\tau,u_{m}(\tau),u_{m}-u)~d\tau.
\end{aligned}
\end{equation}
Put $v_{m}=u_{m}$ in equation \eqref{N24a} and substitute  in \eqref{29-3-23-8}, we get 
\begin{equation}\label{29-3-23-9}
\begin{aligned}
&M\left(\|u_{m}(t)\|^{2}_{X_{0}(\Omega)}\right)\|u_{m}-u\|^{2}_{X_{0}(\Omega)}\\
&=-M\left(\|u_{m}(t)\|^{2}_{X_{0}(\Omega)}\right)\left(u_{m},u\right)_{X_{0}(\Omega)}-M\left(\|u_{m}(t)\|^{2}_{X_{0}(\Omega)}\right)\left(u,u_{m}-u\right)_{X_{0}(\Omega)}-\left(\partial^{\alpha}_{t}u_{m},u_{m}\right)\\
&+(f,u_{m})+\int_{0}^{t}B(t,\tau,u_{m}(\tau),u)~d\tau+\int_{0}^{t}B(t,\tau,u_{m}(\tau),u_{m}-u)~d\tau.
\end{aligned}
\end{equation}
Denote 
\begin{equation}\label{10-1-24-1}
\begin{aligned}
\mathcal{Z}_{m}&=-M\left(\|u_{m}(t)\|^{2}_{X_{0}(\Omega)}\right)\left(u_{m},u\right)_{X_{0}(\Omega)}-M\left(\|u_{m}(t)\|^{2}_{X_{0}(\Omega)}\right)\left(u,u_{m}-u\right)_{X_{0}(\Omega)}-\left(\partial^{\alpha}_{t}u_{m},u_{m}\right)\\
&+(f,u_{m})+\int_{0}^{t}B(t,\tau,u_{m}(\tau),u)~d\tau.
\end{aligned}
\end{equation}
Let $m \rightarrow \infty$ in \eqref{10-1-24-1} and apply \eqref{29-3-23-3}, \eqref{29-3-23-7-1}, \eqref{29-3-23-1}, \eqref{29-3-23-1-2}, \eqref{29-3-23-2-1} to reach at 
\begin{equation}\label{29-3-23-11}
\begin{aligned}
\mathcal{Z}_{m}\rightarrow-\left(\eta,u\right)_{X_{0}(\Omega)}-\left(\partial^{\alpha}_{t}u,u\right)+(f,u)+\int_{0}^{t}B(t,\tau,u(\tau),u)~d\tau.
\end{aligned}
\end{equation}
Take $v=u$ in equation \eqref{N24a1} to deduce that $\mathcal{Z}_{m}\rightarrow 0~a.e.~t\in (0,T)$. Further, rewrite equation \eqref{29-3-23-9} as
\begin{equation}\label{29-3-23-10-1}
\begin{aligned}
M\left(\|u_{m}(t)\|^{2}_{X_{0}(\Omega)}\right)\|u_{m}-u\|^{2}_{X_{0}(\Omega)}&=\mathcal{Z}_{m}+\int_{0}^{t}B(t,\tau,u_{m}(\tau),u_{m}-u)~d\tau\\
&=\mathcal{Z}_{m}+\int_{0}^{t}(b(x,t,\tau)u_{m}(\tau),u_{m}-u)~d\tau.
\end{aligned}
\end{equation}
Employing \eqref{positivity of diffusion coefficient}, \eqref{8-1-24-1}, and applying Cauchy-Schwarz  inequality, we get
\begin{equation}\label{29-3-23-10}
\begin{aligned}
\|u_{m}-u\|^{2}_{X_{0}(\Omega)}&\lesssim |\mathcal{Z}_{m}| +\|u_{m}(t)-u(t)\|\left(\|(-\Delta)^{s}u_{m}\|_{L^{2}(0,T;L^{2}(\Omega))}+\|u_{m}\|_{L^{2}(0,T;L^{2}(\Omega))}\right).
\end{aligned}
\end{equation}
Apply \eqref{20-4-6}, \eqref{29-3-23-1-2} and letting $m\rightarrow \infty$ in \eqref{29-3-23-10}  to deduce 
\begin{equation}\label{29-3-23-13}
\begin{aligned}
\lim_{m\rightarrow \infty}\|u_{m}-u\|^{2}_{X_{0}(\Omega)}\leq 0 \quad a.e.~t\in (0,T).
\end{aligned}
\end{equation}
Thus 
\begin{equation}\label{29-3-23-15}
\begin{aligned}
\lim_{m\rightarrow \infty}\|u_{m}-u\|_{X_{0}(\Omega)} = 0 \quad a.e.~t\in (0,T).
\end{aligned}
\end{equation}
Using continuity of $M$, we get 
\begin{equation}\label{29-3-23-15a}
\begin{aligned}
M\left(\|u_{m}\|^{2}_{X_{0}(\Omega)}\right)\rightarrow M\left(\|u\|^{2}_{X_{0}(\Omega)}\right)\quad a.e.~t\in (0,T).
\end{aligned}
\end{equation}
This implies 
\begin{equation}\label{29-3-23-16}
\begin{aligned}
\eta= M\left(\|u\|^{2}_{X_{0}(\Omega)}\right)u\quad a.e.~t\in (0,T).
\end{aligned}
\end{equation}
Put \eqref{29-3-23-16} in \eqref{N24a1} to conclude the existence of the weak solution.\\\\
\textbf{(Uniqueness)} Suppose that $u_{1}$ and $u_{2}$ are two solutions of the equation \eqref{weak form 1} then $w=u_{1}-u_{2}$ satisfies the following equations for all $v \in X_{0}(\Omega)$ and $a.e. ~t\in(0,T)$ 
\begin{equation}\label{weak form 1-1-1}
\begin{aligned}
&\left(\partial^{\alpha}_{t}w,v\right)+M\left(\|u_{1}(t)\|^{2}_{X_{0}(\Omega)}\right)(w,v)_{X_{0}(\Omega)}\\
&=\left(M\left(\|u_{2}(t)\|^{2}_{X_{0}(\Omega)}\right)-M\left(\|u_{1}(t)\|^{2}_{X_{0}(\Omega)}\right)\right)(u_{2},v)_{X_{0}(\Omega)}+\int_{0}^{t}B(t,\tau,w(\tau),v)~d\tau.
    \end{aligned}
\end{equation}
 Put  $v=w$ in \eqref{weak form 1-1-1} and apply \eqref{positivity of diffusion coefficient},  \eqref{8-1-24-1} to  get 
 \begin{equation}\label{weak form 1-1}
\begin{aligned}
&\left(\partial^{\alpha}_{t}w,w\right)+m_{0}\|w\|^{2}_{X_{0}(\Omega)}\\
&\leq L_{M}\left(\|u_{2}(t)\|_{X_{0}(\Omega)}+\|u_{1}(t)\|_{X_{0}(\Omega)}\right)\|u_{2}\|_{X_{0}(\Omega)}\|w\|^{2}_{X_{0}(\Omega)}+B_{0}\|w(t)\|_{X_{0}(\Omega)}\int_{0}^{t}\|w(\tau)\|_{X_{0}(\Omega)}~d\tau.
    \end{aligned}
\end{equation}
Using a priori bound \eqref{20-4-6} and Young's inequality we have
\begin{equation}\label{weak form 1-2}
\begin{aligned}
&\left(\partial^{\alpha}_{t}w,w\right)+\left(m_{0}-4L_{M}K^{2}\right)\|w\|^{2}_{X_{0}(\Omega)}\lesssim \int_{0}^{t}\|w(\tau)\|^{2}_{X_{0}(\Omega)}~d\tau.
    \end{aligned}
\end{equation}
Now proceed as we prove estimate \eqref{20-4-1} to conclude $w=0$.\\\\
\textbf{(Initial condition)} Since $u$ is the weak solution of  \eqref{problem statement}, therefore it satisfies  the following equation for all $\phi \in C^{1}([0,T];X_{0}(\Omega))$ such that 
\begin{equation}\label{29-3-23-17}
\begin{aligned}
\int_{0}^{T}\int_{\Omega}\partial^{\alpha}_{t}u\phi~dxdt&+\int_{0}^{T}\int_{\Omega}M\left(\|u(t)\|^{2}_{X_{0}(\Omega)}\right)(-\Delta)^{s}u\phi~dxdt\\
&=\int_{0}^{T}\int_{\Omega}f\phi~dxdt+\int_{0}^{T}\int_{\Omega}\int_{0}^{t}b(x,t,\tau)u(\tau)\phi~d\tau dxdt.
\end{aligned}
\end{equation}
Integrating by parts in time  in the equation \eqref{29-3-23-17} by taking $\phi(T)=0$ we get 
\begin{equation}\label{29-3-23-18}
\begin{aligned}
-\int_{0}^{T}\int_{\Omega}k\ast(u(t)-u_{0})\phi_{t}~dxdt&+\int_{0}^{T}\int_{\Omega}M\left(\|u(t)\|^{2}_{X_{0}(\Omega)}\right)(-\Delta)^{s}u\phi~dxdt\\
&=\int_{0}^{T}\int_{\Omega}f\phi~dxdt+\int_{0}^{T}\int_{\Omega}\int_{0}^{t}b(x,t,\tau)u(\tau)\phi~d\tau dxdt\\
&+\int_{\Omega}(k\ast(u(t)-u_{0}))(0)\phi(0)~dx.
\end{aligned}
\end{equation}
From equation \eqref{N24a}, we also have 
\begin{equation}\label{29-3-23-19}
\begin{aligned}
\int_{0}^{T}\int_{\Omega}\partial^{\alpha}_{t}u_{m}\phi~dxdt&+\int_{0}^{T}\int_{\Omega}M\left(\|u_{m}(t)\|^{2}_{X_{0}(\Omega)}\right)(-\Delta)^{s}u_{m}\phi~dxdt\\
&=\int_{0}^{T}\int_{\Omega}f\phi~dxdt+\int_{0}^{T}\int_{\Omega}\int_{0}^{t}b(x,t,\tau)u_{m}(\tau)\phi~d\tau dxdt.
\end{aligned}
\end{equation}
Again integrating by parts in time  in the equation \eqref{29-3-23-19} by taking $\phi(T)=0$ we get 
\begin{equation}\label{29-3-23-20}
\begin{aligned}
-\int_{0}^{T}\int_{\Omega}k\ast(u_{m}(t)-u_{m}(0))\phi_{t}~dxdt&+\int_{0}^{T}\int_{\Omega}M\left(\|u_{m}(t)\|^{2}_{X_{0}(\Omega)}\right)(-\Delta)^{s}u_{m}\phi~dxdt\\
&=\int_{0}^{T}\int_{\Omega}f\phi~dxdt+\int_{0}^{T}\int_{\Omega}\int_{0}^{t}b(x,t,\tau)u_{m}(\tau)\phi~d\tau dxdt,
\end{aligned}
\end{equation}
where we have used the fact that $(k\ast(u_{m}-u_{m}(0)))(0)=0$. Let $m\rightarrow \infty$ in equation \eqref{29-3-23-20}, we conclude 
\begin{equation}\label{29-3-23-21}
\begin{aligned}
-\int_{0}^{T}\int_{\Omega}k\ast(u(t)-u_{0})\phi_{t}~dxdt&+\int_{0}^{T}\int_{\Omega}M\left(\|u(t)\|^{2}_{X_{0}(\Omega)}\right)(-\Delta)^{s}u\phi~dxdt\\
&=\int_{0}^{T}\int_{\Omega}f\phi~dxdt+\int_{0}^{T}\int_{\Omega}\int_{0}^{t}b(x,t,\tau)u(\tau)\phi~d\tau dxdt.
\end{aligned}
\end{equation}
Comparing equations \eqref{29-3-23-18} and \eqref{29-3-23-21} we obtain 
\begin{equation}
\int_{\Omega}(k\ast(u(t)-u_{0}))(0)\phi(0)~dx=0.
\end{equation}
Since $\phi$ is arbitrary therefore we get $(k\ast(u(t)-u_{0}))(0)=0$. Further, using \cite[Proposition 6.7]{Kubica}, we conclude $u(0)=u_{0}$ for $\alpha \in \left(\frac{1}{2},1\right)$. For other values of $\alpha \in \left(0,\frac{1}{2}\right]$, we need more compatibility conditions on the given data \cite{Kubica}.
 \end{proof}
\section{ Regularity of the  solution of $(\mathcal{K}^{s}_{\alpha})$}\label{7-1-24-3}
In this section, we discuss the regularity of the solution of \eqref{problem statement}. Recall the elliptic regularity results for the homogeneous problem 
\begin{equation}\label{12-1-24-1}
\begin{aligned}
(-\Delta)^{s}u=g\quad \text{in}~\Omega,\\
u=0\quad \text{in}~\Omega^{c}.
\end{aligned}
\end{equation}
\begin{thm}\cite[Proposition 2.1]{acosta2019finite} \label{12-1-24-4} Let $\Omega \subset \mathbb{R}^{d}$ be a bounded domain with smooth boundary. Assume that $g\in L^{2}(\Omega)$ and the solution $u$ of the problem \eqref{12-1-24-1} is in $X_{0}(\Omega)$. Then its solution $u$  satisfies the following regularity estimate 
\begin{equation}\label{12-1-24-2}
\|u\|_{H^{s+\nu}(\Omega)} \lesssim \|g\|_{L^{2}(\Omega)},
\end{equation}
 where $\nu=\min\left\{s,\frac{1}{2}-\epsilon\right\}$ for some small $\epsilon >0$.
\end{thm}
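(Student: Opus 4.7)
The plan is to split $\Omega$ into an interior piece and a neighbourhood of $\partial\Omega$ and handle each region by different tools, since the loss from $H^{2s}$ (the natural interior regularity for $(-\Delta)^s$ applied to an $L^2$ datum) down to $H^{s+\nu}$ reflects purely a boundary phenomenon: the typical boundary behaviour of the solution is $d(x,\partial\Omega)^s$, which lies in $H^{s+1/2-\epsilon}$ but not in $H^{s+1/2}$.

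First, I would establish interior regularity. Fix a cutoff $\chi\in C_c^\infty(\Omega)$; then $\chi u$ extended by zero lies in $H^s(\mathbb{R}^d)$, and one computes
\begin{equation*}
(-\Delta)^s(\chi u)=\chi g+[(-\Delta)^s,\chi]u.
\end{equation*}
The commutator is a nonlocal operator of order strictly less than $2s$ applied to $u$, and in fact maps $L^2(\mathbb{R}^d)$ to $L^2(\mathbb{R}^d)$ with bound controlled by $\|u\|_{L^2}+\|u\|_{X_0(\Omega)}$; one verifies this via the singular-integral representation \eqref{15-4-18}, splitting near and far from the singularity and using that $\chi$ is smooth. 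Fourier multipliers on $\mathbb{R}^d$ then give $\|\chi u\|_{H^{2s}(\mathbb{R}^d)}\lesssim \|g\|_{L^2(\Omega)}+\|u\|_{X_0(\Omega)}$, and the energy identity $\|u\|_{X_0(\Omega)}^2=(g,u)\lesssim \|g\|_{L^2}\|u\|_{L^2}$ combined with the fractional Poincaré inequality absorbs the $X_0(\Omega)$ norm into $\|g\|_{L^2(\Omega)}$.

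Second, I would carry out the boundary analysis. By the Ros-Oton--Serra boundary regularity for the restricted fractional Laplacian, $u/\delta^s$ extends to $C^\gamma(\overline\Omega)$ for some $\gamma>0$, where $\delta(x)=\mathrm{dist}(x,\partial\Omega)$, with norm controlled by $\|g\|_{L^\infty}$; a finer $L^2$-based version (obtainable by truncation arguments or by duality against the analogous Dirichlet problem) yields $\|u/\delta^s\|_{L^\infty}\lesssim \|g\|_{L^2(\Omega)}$ under our smoothness of $\partial\Omega$. Choosing finitely many coordinate charts flattening $\partial\Omega$ to a halfspace, the local model problem becomes estimating $\|\delta^s \varphi\|_{H^{s+\nu}}$ for smooth $\varphi$, which reduces to the one-dimensional computation $t^s\in H^\sigma_{\mathrm{loc}}(0,\infty)$ iff $\sigma<s+1/2$; this is checked by evaluating the Gagliardo seminorm of $t^s$ explicitly. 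This produces the threshold $\nu\le 1/2-\epsilon$, while the interior contribution gives $\nu\le s$, hence $\nu=\min\{s,1/2-\epsilon\}$.

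Finally, a partition of unity $\{\chi_{\mathrm{int}},\chi_{\mathrm{bdy}}\}$ subordinate to an interior set and a boundary collar combines the two local estimates to a global one, using standard localization for fractional Sobolev norms. The principal obstacle is the boundary step: uniformly controlling the Sobolev regularity of $u$ in a neighbourhood of $\partial\Omega$ requires not just pointwise Hölder control of $u/\delta^s$ but quantitative $L^2$ estimates on its oscillations, together with careful handling of the nonlocal commutator under flattening—nonlocal operators do not commute with smooth changes of coordinates, so one must estimate the remainder terms generated by the change of variables and bound them by $\|g\|_{L^2(\Omega)}$.
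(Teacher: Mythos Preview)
The paper does not prove this theorem: it is quoted verbatim from \cite{acosta2019finite} and used as a black box in the proof of Theorem~\ref{29-3-23-22}. So there is no ``paper's own proof'' to compare against; the relevant comparison is with the argument in the cited reference, which in turn rests on Grubb's pseudodifferential $\mu$-transmission theory (and, for the case $s<1/2$, on the full $H^{2s}$ mapping property of the solution operator). Your outline captures the correct heuristic---interior $H^{2s}$ regularity plus a boundary layer modelled on $\delta^s$---but it is a genuinely different route from the pseudodifferential calculus used in the literature.

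There is, however, a real gap in your boundary step. You invoke an estimate of the form $\|u/\delta^s\|_{L^\infty}\lesssim\|g\|_{L^2(\Omega)}$, obtained ``by truncation arguments or by duality.'' The Ros-Oton--Serra result controls $\|u/\delta^s\|_{C^\gamma}$ by $\|g\|_{L^\infty}$, and there is no straightforward way to upgrade this to an $L^\infty$ bound in terms of $\|g\|_{L^2}$: for $d\ge 2$ and general $s$ the solution operator does not map $L^2$ to $L^\infty$ (there is no Sobolev embedding of $H^{2s}$ into $L^\infty$ unless $4s>d$), so $u$ itself need not be bounded, let alone $u/\delta^s$. What one actually needs near the boundary is an $L^2$-based statement---control of $u$ in a weighted Sobolev space such as Grubb's $H^{\mu(s)}_{s}$ spaces, or equivalently control of the Mellin transform in the normal direction---and this is exactly what the $\mu$-transmission machinery supplies. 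Your one-dimensional computation that $t^s\in H^\sigma_{\mathrm{loc}}$ iff $\sigma<s+1/2$ is correct and identifies the right threshold, but to turn it into a proof you must show that $u$ differs from a multiple of $\delta^s$ by something smoother \emph{in an $L^2$ sense}, with constants depending only on $\|g\|_{L^2}$; the pointwise H\"older theory does not give this.
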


 \begin{thm}\label{29-3-23-22} Under the assumptions (H1)-(H3), we have the following regularity estimate on  the solution $u$ of  \eqref{problem statement}
 \begin{equation}\label{29-3-23-21-1}
 \|u\|_{L^{2}(0,T;H^{s+\nu}(\Omega))} \lesssim \|u_{0}\|_{X_{0}(\Omega)}+\|f\|_{L^{2}_{\alpha}(0,T;L^{2}(\Omega))},
 \end{equation}
  where $\nu=\min\left\{s,\frac{1}{2}-\epsilon\right\}$ for some small $\epsilon >0$.
 \end{thm}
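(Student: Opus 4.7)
The plan is to combine the pointwise-in-time elliptic regularity of Theorem \ref{12-1-24-4} with the a priori bound \eqref{32-5y} already secured in Lemma \ref{21-4-8}. By \eqref{20-4-6}, $u(\cdot,t)\in X_{0}(\Omega)$ for a.e.\ $t\in(0,T)$, and by \eqref{32-5y}, $(-\Delta)^{s}u(\cdot,t)\in L^{2}(\Omega)$ for a.e.\ $t$. Hence, for almost every such $t$, the time slice $u(\cdot,t)$ solves the linear elliptic problem
\begin{equation*}
(-\Delta)^{s}w = g(\cdot,t) \text{ in } \Omega, \qquad w=0 \text{ in } \Omega^{c},
\end{equation*}
with right-hand side $g(\cdot,t):=(-\Delta)^{s}u(\cdot,t)\in L^{2}(\Omega)$, so Theorem \ref{12-1-24-4} is applicable.

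Applying that theorem on each time slice gives
\begin{equation*}
\|u(\cdot,t)\|_{H^{s+\nu}(\Omega)} \lesssim \|(-\Delta)^{s}u(\cdot,t)\|_{L^{2}(\Omega)} \quad \text{for a.e.\ } t\in(0,T),
\end{equation*}
where the implicit constant depends only on $\Omega,s,\nu$ and is in particular independent of $t$. Squaring and integrating over $(0,T)$ yields
\begin{equation*}
\|u\|_{L^{2}(0,T;H^{s+\nu}(\Omega))}^{2} \lesssim \|(-\Delta)^{s}u\|_{L^{2}(0,T;L^{2}(\Omega))}^{2},
\end{equation*}
and then invoking the a priori bound \eqref{32-5y} closes the estimate and delivers \eqref{29-3-23-21-1}.

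There is no serious obstacle: all the heavy lifting was done when the $L^{2}_{t}L^{2}_{x}$ control of $(-\Delta)^{s}u$ was established in Lemma \ref{21-4-8}, and the spatial regularity of $u$ then upgrades for free from the elliptic theory. The one routine item to verify is the Bochner measurability of $t\mapsto\|u(\cdot,t)\|_{H^{s+\nu}(\Omega)}$, which is immediate from the fact that the solution map $g\mapsto w$ for \eqref{12-1-24-1} is a bounded linear operator from $L^{2}(\Omega)$ into $H^{s+\nu}(\Omega)$, composed with the measurable map $t\mapsto (-\Delta)^{s}u(\cdot,t)\in L^{2}(\Omega)$.
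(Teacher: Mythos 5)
Your proposal is correct and follows essentially the same route as the paper: apply the elliptic regularity result of Theorem \ref{12-1-24-4} slice-wise in time and integrate, controlling the right-hand side by the a priori bounds of Lemma \ref{21-4-8}. The only (harmless) cosmetic difference is that you take the elliptic datum to be $(-\Delta)^{s}u(\cdot,t)$ itself and invoke \eqref{32-5y} directly, whereas the paper writes the datum via the equation as $M\left(\|u(t)\|^{2}_{X_{0}(\Omega)}\right)^{-1}\left(-\partial^{\alpha}_{t}u+f+\int_{0}^{t}b\,u\,d\tau\right)$ and bounds it term by term — the two quantities coincide by the equation, so the arguments are equivalent.
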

 \begin{proof} The equation \eqref{problem statement} can be rewritten as
 \begin{equation}\label{12-1-24-3}
    (-\Delta)^{s}u=\frac{1}{M\left(\|u(t)\|^{2}_{X_{0}(\Omega)}\right)}\left(-\partial^{\alpha}_{t}u+f(x,t)+ \int_{0}^{t}b(x,t,\tau)u(\tau)~d\tau\right),
\end{equation}
with the homogeneous Dirichlet boundary condition. Using the a priori bounds  \eqref{20-4-1},  \eqref{20-4-6}, \eqref{32-5}, and \eqref{20-4-7} we can deduce that RHS of \eqref{12-1-24-3} belongs to $L^{2}(\Omega)$. Therefore, by applying Theorem \ref{12-1-24-4} we can conclude the desired result \eqref{29-3-23-21-1}.
\end{proof}

\begin{thm}\label{13-1-24-1} Suppose that $u_{0}\in X_{0}(\Omega)$ such that $(-\Delta)^{s}u_{0}\in L^{2}(\Omega)$ and $f= b=0$ in \eqref{problem statement}. Then the solution $u$ of   \eqref{problem statement} satisfies the following estimates 
\begin{align}
\|u(t)\|&\lesssim ~(1+t^{\alpha})\quad \forall~t\in [0,T] \label{12-1-24-16}.\\
\|u(t)-u(t^{\ast})\| &\lesssim ~|t-t^{\ast}|^{\alpha}\quad \forall~t,t^{\ast}\in [0,T]\label{12-1-24-15}.\\
\|\partial^{\alpha}_{t}u\|&\lesssim~ C\quad \forall~t\in [0,T]\label{12-1-24-15A}.
\end{align}
\end{thm}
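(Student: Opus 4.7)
The strategy is to reduce everything to a single pointwise-in-time bound on $\|(-\Delta)^{s}u(t)\|_{L^{2}(\Omega)}$ and derive all three estimates from it via an integral representation. With $f=b=0$, convolving the equation $\partial_{t}^{\alpha}u+M(\|u\|_{X_{0}(\Omega)}^{2})(-\Delta)^{s}u=0$ with the Riemann--Liouville kernel $l(t)=t^{\alpha-1}/\Gamma(\alpha)$ (using $l\ast k=1$) gives the implicit formula
\begin{equation*}
u(t)=u_{0}-\frac{1}{\Gamma(\alpha)}\int_{0}^{t}(t-\tau)^{\alpha-1}M\bigl(\|u(\tau)\|^{2}_{X_{0}(\Omega)}\bigr)(-\Delta)^{s}u(\tau)\,d\tau.
\end{equation*}
Lemma \ref{21-4-8} bounds $\|u\|_{L^{\infty}(0,T;X_{0}(\Omega))}$, and Lipschitz continuity of $M$ furnishes a uniform constant $M_{1}$ with $M(\|u(t)\|^{2}_{X_{0}(\Omega)})\le M_{1}$. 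Granted the target bound $\|(-\Delta)^{s}u(t)\|\le\|(-\Delta)^{s}u_{0}\|$ for a.e.\ $t\in[0,T]$, estimate \eqref{12-1-24-16} is the triangle inequality applied to the integral representation, and \eqref{12-1-24-15A} is obtained by reading $\partial_{t}^{\alpha}u=-M(\|u\|^{2}_{X_{0}(\Omega)})(-\Delta)^{s}u$ pointwise. For \eqref{12-1-24-15} I would split $u(t)-u(t^{\ast})$ (with $t>t^{\ast}$) into an integral on $[t^{\ast},t]$, equal to $-\Gamma(\alpha)^{-1}\int_{t^{\ast}}^{t}(t-\tau)^{\alpha-1}M(-\Delta)^{s}u\,d\tau$ and bounded by $(t-t^{\ast})^{\alpha}$ up to a constant, plus a kernel-difference piece on $[0,t^{\ast}]$ whose absolute value integrates to $\alpha^{-1}\bigl((t^{\ast})^{\alpha}+(t-t^{\ast})^{\alpha}-t^{\alpha}\bigr)\le\alpha^{-1}(t-t^{\ast})^{\alpha}$ via the concavity of $x\mapsto x^{\alpha}$.

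The substance of the proof is the pointwise bound on $\|(-\Delta)^{s}u(t)\|$, which I would establish at the Galerkin level and then pass to the limit. Writing $u_{m}=\sum_{i=1}^{m}c_{i}(t)\phi_{i}$ in the eigenbasis of \eqref{20-1-24-1}, the $L^{2}$-orthonormality of $\{\phi_{i}\}$ together with $(\phi_{i},\phi_{j})_{X_{0}(\Omega)}=\lambda_{j}\delta_{ij}$ decouples the Galerkin system into the scalar fractional ODEs
\begin{equation*}
\partial_{t}^{\alpha}c_{j}(t)+M\bigl(\|u_{m}(t)\|^{2}_{X_{0}(\Omega)}\bigr)\lambda_{j}\,c_{j}(t)=0,\qquad c_{j}(0)=(u_{0},\phi_{j}).
\end{equation*}
Multiplying by $c_{j}$ and invoking the scalar form of Lemma \ref{Positivity of fractional derivative} via the Yosida regularization of $k$ already employed in Section \ref{7-1-24-2} (since $k\notin W^{1,1}[0,T]$) yields $\frac{1}{2}\partial_{t}^{\alpha}c_{j}^{2}+M\lambda_{j}c_{j}^{2}\le 0$, hence $\partial_{t}^{\alpha}c_{j}^{2}\le 0$. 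Multiplying by $\lambda_{j}^{2}$ and summing over $j\le m$ produces
\begin{equation*}
\partial_{t}^{\alpha}\|(-\Delta)^{s}u_{m}(t)\|^{2}\le 0.
\end{equation*}
A Luchko-type fractional maximum principle applied to the continuous nonnegative map $t\mapsto\|(-\Delta)^{s}u_{m}(t)\|^{2}$ then forces $\|(-\Delta)^{s}u_{m}(t)\|^{2}\le\|(-\Delta)^{s}u_{m}(0)\|^{2}\le\|(-\Delta)^{s}u_{0}\|^{2}$, the last step by Parseval in the eigenbasis together with $(-\Delta)^{s}u_{0}\in L^{2}(\Omega)$. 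Combining this uniform-in-$m$ $L^{\infty}(0,T;L^{2}(\Omega))$ bound with the weak convergence \eqref{9-1-24-1} and weak-$*$ lower semicontinuity transfers the bound to $u$.

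The hard part will be the Luchko-type step: the scalar inequality $\partial_{t}^{\alpha}\phi\le 0$ with $\phi\ge 0$ does not imply $\phi(t)\le\phi(0)$ simply by convolving with $l$, since $1\ast\phi\ge 0$ is strictly weaker than pointwise nonnegativity. I would argue at a supremum point $t^{\ast}$ of $\phi$ via the equivalent representation $\partial_{t}^{\alpha}\phi(t^{\ast})=\frac{\alpha}{\Gamma(1-\alpha)}\int_{0}^{t^{\ast}}\frac{\phi(t^{\ast})-\phi(\tau)}{(t^{\ast}-\tau)^{\alpha+1}}\,d\tau+\frac{\phi(t^{\ast})-\phi(0)}{(t^{\ast})^{\alpha}\Gamma(1-\alpha)}$, in which both terms are nonnegative and the second is strictly positive whenever $\phi(t^{\ast})>\phi(0)$, contradicting the assumed sign. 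A secondary technical issue is that weak-$*$ passage delivers the bound on $\|(-\Delta)^{s}u(t)\|$ only for almost every $t$; to upgrade to all $t$ in the pointwise claims \eqref{12-1-24-16}--\eqref{12-1-24-15A} I would identify $u$ with the continuous representative whose existence is itself implied by the $\alpha$-Hölder estimate \eqref{12-1-24-15} verified at the Galerkin level and then transferred to the limit.
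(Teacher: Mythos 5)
Your proposal is correct and arrives at the three estimates exactly as the paper does once the key uniform bound $\|(-\Delta)^{s}u(t)\|\lesssim\|(-\Delta)^{s}u_{0}\|$ is in hand: the integral representation \eqref{12-1-24-13}, the triangle inequality for \eqref{12-1-24-16}, the kernel splitting with $(t^{\ast})^{\alpha}+(t-t^{\ast})^{\alpha}-t^{\alpha}\le(t-t^{\ast})^{\alpha}$ for \eqref{12-1-24-15}, and reading the equation pointwise for \eqref{12-1-24-15A} are all identical to \eqref{12-1-24-17}--\eqref{12-1-24-21}. Where you genuinely diverge is in how that key bound is proved. The paper diagonalizes in the eigenbasis, applies the fractional Gronwall inequality (Lemma \ref{Gronwall inequality}) mode by mode to get $|\alpha_{i}(t)|\le|\alpha_{i}(0)|E_{\alpha}(\lambda_{i}Ct^{\alpha})$, and then invokes the Mittag--Leffler decay of Lemma \ref{8-1-24-2} to drop the $\lambda_{i}$-dependence; note that Lemma \ref{8-1-24-2} is stated for $\mu\le|\arg z|\le\pi$, i.e.\ away from the positive real axis, so its application to the positive argument $\lambda_{i}Ct^{\alpha}$ requires the sign of the reaction term to be retained (a comparison with $E_{\alpha}(-\lambda_{i}m_{0}t^{\alpha})$), which the Gronwall route as written discards. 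Your energy argument --- Alikhanov's inequality $c_{j}\partial_{t}^{\alpha}c_{j}\ge\frac{1}{2}\partial_{t}^{\alpha}c_{j}^{2}$ at the Galerkin level, summation against $\lambda_{j}^{2}$, monotonicity of $t\mapsto\|(-\Delta)^{s}u_{m}(t)\|^{2}$, and weak-$\ast$ lower semicontinuity --- sidesteps the Mittag--Leffler function entirely and keeps the constant manifestly independent of the mode index, at the cost of a limit passage $m\to\infty$ that the paper avoids by working directly with the Fourier coefficients of $u$. One simplification: the step you single out as the hard part is not needed. From $\partial_{t}^{\alpha}\phi\le 0$ you may simply convolve with the nonnegative kernel $l(t)=t^{\alpha-1}/\Gamma(\alpha)$; since $l\ast k=1$ and $(k\ast(\phi-\phi(0)))(0)=0$, this gives $(l\ast\partial_{t}^{\alpha}\phi)(t)=\phi(t)-\phi(0)\le 0$ pointwise --- the same device the paper uses to pass from \eqref{13} to \eqref{24} --- so no Luchko-type maximum principle or supremum-point representation is required.
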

\begin{proof} Since the solution $u$ of  \eqref{weak form 1} belongs to $L^{2}(\Omega)$ thus we can write $u$ as a Fourier series expansion in terms of eigenfunctions $\{\phi_{i}\}_{i\in \mathbb{N}}$ of the fractional eigenvalue problem  \eqref{20-1-24-1} i.e.,
\begin{equation}\label{12-1-24-5}
u(x,t)=\sum_{i=1}^{\infty}\alpha_{i}(t)\phi_{i}(x).
\end{equation}
By putting the expression of $u$ $\eqref{12-1-24-5}$ in \eqref{weak form 1} along with $f=b=0$ and $v=\phi_{j}$ then we get 
\begin{equation}\label{12-1-24-6}
\begin{aligned}
\partial^{\alpha}_{t}\alpha_{i}(t)&=-\lambda_{i}M\left(\|u(t)\|^{2}_{X_{0}(\Omega)}\right)\alpha_{i}(t)\quad \forall~i=1,2,3,\dots\\
\alpha_{i}(0)&=(u_{0},\phi_{i})\quad \forall~i=1,2,3,\dots.
\end{aligned}
\end{equation}
In \eqref{12-1-24-6}, we have used the properties of eigenfunctions being orthonormal in $L^{2}(\Omega)$ and orthogonal in $X_{0}(\Omega)$. By means of integral transformation \cite{webb2019weakly}, the equation \eqref{12-1-24-6} is transformed into 
\begin{equation}\label{12-1-24-7}
\alpha_{i}(t)=\alpha_{i}(0)-\lambda_{i}\int_{0}^{t}\frac{(t-\tau)^{\alpha-1}}{\Gamma(\alpha)}M\left(\|u(\tau)\|^{2}_{X_{0}(\Omega)}\right)\alpha_{i}(\tau)~d\tau \quad \forall~i=1,2,3,\dots.
\end{equation}
Taking modulus on both sides of \eqref{12-1-24-7} and employing  a priori bound \eqref{20-4-6} and continuity of $M$, we get  $M\left(\|u(\tau)\|^{2}_{X_{0}(\Omega)}\right) \leq C$ and therefore
\begin{equation}\label{12-1-24-8}
|\alpha_{i}(t)|\leq|\alpha_{i}(0)|+\lambda_{i}C\int_{0}^{t}\frac{(t-\tau)^{\alpha-1}}{\Gamma(\alpha)}|\alpha_{i}(\tau)|~d\tau \quad \forall~i=1,2,3,\dots.
\end{equation}
An application of Gronwall's inequality Lemma \ref{Gronwall inequality}, we deduce
\begin{equation}\label{12-1-24-9}
|\alpha_{i}(t)|\leq |\alpha_{i}(0)| E_{\alpha}(\lambda_{i}Ct^{\alpha}) \quad \forall~i=1,2,3,\dots.
\end{equation}
Using the definition of $\alpha_{i}(0)$ and estimate on Mittag-Leffler function \eqref{8-1-24-2}, we obtain 
\begin{equation}\label{12-1-24-10}
|\alpha_{i}(t)|\lesssim |(u_{0},\phi_{i})| \quad \forall~i=1,2,3,\dots.
\end{equation}
Consider \eqref{12-1-24-5}, then we have 
\begin{equation}\label{12-1-24-11}
\|(-\Delta)^{s}u(t)\|^{2}=\sum_{j=1}^{\infty}\lambda_{j}^{2}|\alpha_{j}(t)|^{2} \lesssim \sum_{j=1}^{\infty}\lambda_{j}^{2}|(u_{0},\phi_{i})|^{2}=\|(-\Delta)^{s}u_{0}\|^{2} \quad \forall~t\in [0,T] .
\end{equation}
Now, we take the equation \eqref{problem statement} with $f=b=0$ i.e.,
\begin{equation}\label{12-1-24-12}
 \partial^{\alpha}_{t}u=-M\left(\|u(t)\|^{2}_{X_{0}(\Omega)}\right)(-\Delta)^{s}u.
\end{equation}
Again applying fractional integral operator of order $\alpha$, then the equation \eqref{12-1-24-12} is converted into 
\begin{equation}\label{12-1-24-13}
u(t)=u_{0}-\int_{0}^{t}\frac{(t-\tau)^{\alpha-1}}{\Gamma(\alpha)}M\left(\|u(\tau)\|^{2}_{X_{0}(\Omega)}\right)(-\Delta)^{s}u(\tau)~d\tau.
\end{equation}
Taking $L^{2}(\Omega)$ norm on both side of \eqref{12-1-24-13}, and applying a priori bound \eqref{20-4-6} and \eqref{positivity of diffusion coefficient} along with estimate \eqref{12-1-24-11} to get 
\begin{equation}\label{12-1-24-14}
\|u(t)\|~\lesssim~\left( 1+\int_{0}^{t}\frac{(t-\tau)^{\alpha-1}}{\Gamma(\alpha)}~d\tau\right)~ \lesssim~ (1+t^{\alpha}).
\end{equation}
\par To prove the second estimate \eqref{12-1-24-15}, we first consider $t^{\ast}< t$. Using equation \eqref{12-1-24-13}, we get 
\begin{equation}\label{12-1-24-17}
\begin{aligned}
u(t)-u(t^{\ast})& = \frac{1}{\Gamma(\alpha)}\int_{0}^{t^{\ast}}(t^{\ast}-\tau)^{\alpha-1}M\left(\|u(\tau)\|^{2}_{X_{0}(\Omega)}\right)(-\Delta)^{s}u(\tau)~d\tau\\
&-\frac{1}{\Gamma(\alpha)}\int_{0}^{t}(t-\tau)^{\alpha-1}M\left(\|u(\tau)\|^{2}_{X_{0}(\Omega)}\right)(-\Delta)^{s}u(\tau)~d\tau,
\end{aligned}
\end{equation}
which can also be rewritten as 
\begin{equation}\label{12-1-24-18}
\begin{aligned}
u(t)-u(t^{\ast})& = \frac{1}{\Gamma(\alpha)}\int_{0}^{t^{\ast}}\left[(t^{\ast}-\tau)^{\alpha-1}-(t-\tau)^{\alpha-1}\right]M\left(\|u(\tau)\|^{2}_{X_{0}(\Omega)}\right)(-\Delta)^{s}u(\tau)~d\tau\\
&-\frac{1}{\Gamma(\alpha)}\int_{t^{\ast}}^{t}(t-\tau)^{\alpha-1}M\left(\|u(\tau)\|^{2}_{X_{0}(\Omega)}\right)(-\Delta)^{s}u(\tau)~d\tau.
\end{aligned}
\end{equation}
Thus, we have 
\begin{equation}\label{12-1-24-19}
\begin{aligned}
\|u(t)-u(t^{\ast})\|& \leq  \frac{1}{\Gamma(\alpha)}\int_{0}^{t^{\ast}}|(t^{\ast}-\tau)^{\alpha-1}-(t-\tau)^{\alpha-1}|~|M\left(\|u(\tau)\|^{2}_{X_{0}(\Omega)}\right)|~\|(-\Delta)^{s}u(\tau)\|~d\tau\\
&+\frac{1}{\Gamma(\alpha)}\int_{t^{\ast}}^{t}|(t-\tau)^{\alpha-1}|~|M\left(\|u(\tau)\|^{2}_{X_{0}(\Omega)}\right)|~\|(-\Delta)^{s}u(\tau)\|~d\tau.
\end{aligned}
\end{equation}
Apply a priori bound \eqref{20-4-6} and \eqref{positivity of diffusion coefficient} along with estimate \eqref{12-1-24-11} to get 
\begin{equation}\label{12-1-24-20}
\begin{aligned}
\|u(t)-u(t^{\ast})\|& \lesssim \int_{0}^{t^{\ast}}|(t^{\ast}-\tau)^{\alpha-1}-(t-\tau)^{\alpha-1}|~d\tau+\int_{t^{\ast}}^{t}|(t-\tau)^{\alpha-1}|~d\tau.
\end{aligned}
\end{equation}
As we have $t^{\ast}<t$ this implies  $t^{\ast}-\tau < t-\tau$ for $\tau \in (0,t^{\ast})$. Hence $(t^{\ast}-\tau)^{\alpha-1} >(t-\tau)^{\alpha-1}$. Therefore 
\begin{equation}\label{12-1-24-21}
\begin{aligned}
\|u(t)-u(t^{\ast})\|& \lesssim \int_{0}^{t^{\ast}}\left((t^{\ast}-\tau)^{\alpha-1}-(t-\tau)^{\alpha-1}\right)~d\tau+\int_{t^{\ast}}^{t}(t-\tau)^{\alpha-1}~d\tau\\
&=\left[(t-t^{\ast})^{\alpha}+(t^{\ast})^{\alpha}-t^{\alpha}\right] +(t-t^{\ast})^{\alpha}\lesssim (t-t^{\ast})^{\alpha}.
\end{aligned}
\end{equation}
By interchanging the role of $t$ and $t^{\ast}$, we have
\begin{equation}\label{29-3-23-25}
\|u(t^{\ast})-u(t)\| \lesssim (t^{\ast}-t)^{\alpha}\quad \text{for}~t^{\ast}>t.
\end{equation}
Hence, the estimate \eqref{12-1-24-15} follows immediately using \eqref{12-1-24-21}-\eqref{29-3-23-25}. We can easily prove estimate \eqref{12-1-24-15A} using equation \eqref{12-1-24-12}.
\end{proof}
 \begin{rmk}
 For the nonzero values of  $f$ and $b$ in \eqref{problem statement}, we need to take extra regularity on $f$ i.e., $\|(-\Delta)^{s}f(t)\|$ is bounded for every $t\in [0,T]$. Then it is easy to prove  estimates  \eqref{12-1-24-16}-\eqref{12-1-24-15A} by applying the same techniques as in Theorem \ref{13-1-24-1}.
 \end{rmk}
 \begin{rmk}
     Similar  analysis of this paper can be done to study the space-time fractional partial integro-differential equations with  nonlocal diffusion coefficient $M\left(\|u\|^{2}_{Z}\right)$ where $Z$ is equal to $L^{2}(\Omega)$ or $L^{1}(\Omega)$.
 \end{rmk}

 \bibliographystyle{plain}

\end{document}